\theoremstyle{plain}
\newtheorem{thm}{Theorem}[section]
\newtheorem*{mainthm}{Main Theorem}
\newtheorem{lem}[thm]{Lemma}
\newtheorem{prop}[thm]{Proposition}
\newtheorem{cor}[thm]{Corollary}
\theoremstyle{definition}
\numberwithin{equation}{section}
\newcommand{\thmref}[1]{Theorem~\ref{#1}}
\newcommand{\lemref}[1]{Lemma~\ref{#1}}
\newcommand{\dist}{\operatorname{dist}}
\newcommand{\diam}{\operatorname{diam}}
\newcommand{\Id}{\operatorname{Id}}
\newcommand{\Emb}{\operatorname{Emb}}
\def\C{{\mathbb C}}
\def\D{{\mathbb D}}
\def\N{{\mathbb N}}
\def\Q{{\mathbb Q}}
\def\R{{\mathbb R}}
\def\bS{S}
\def\Z{{\mathbb Z}}
\newcommand{\bbR}{{\mathbb R}} %just so we can directly copy from old version
\newcommand{\bbC}{{\mathbb C}}
\newcommand{\bbZ}{{\mathbb Z}}
\newcommand{\e}{{\epsilon}}
\def\eqdef{:=}
\newcommand{\cM}{{\mathcal M}}
\newcommand{\cR}{{\mathcal R}}
\newcommand{\cP}{{\mathcal P}}
\newcommand{\cO}{{\mathcal O}}
\newcommand{\cB}{{\mathcal B}}
\newcommand{\cU}{{\mathcal U}}
\newcommand{\rp}{{\mathfrak{rp}}}
\newcommand{\RP}{{\mathcal{RP}}}
\newcommand{\ap}{{\mathcal{P}}}
\newcommand{\p}[1]{\bigskip\noindent{\bf #1.}}
\DeclareMathOperator{\esssup}{ess\, sup}
\title[Elastic Graphs for Matings]{Elastic Graphs for Main Molecule Matings}
\author{Caroline Davis, Jasmine Powell, Rebecca R. Winarski, Jonguk Yang}
\date{\today}
\begin{document}
	\begin{abstract}
		Recent work of Dylan Thurston gives a condition for when a post-critically finite branched self-cover of the sphere is equivalent to a rational map.
		We apply D. Thurston's positive criterion for rationality to give a new proof of a theorem of Rees, Shishikura, and Tan about the mateability of quadratic polynomials when one polynomial is in the main molecule.
		These methods may be a step in understanding the mateability of higher degree post-critically finite polynomials and demonstrate how to apply the positive criterion to classical problems.
	\end{abstract}

	\maketitle
	
	\section{Introduction}\label{sec:intro}
	
	Let $f:S^2\rightarrow S^2$ be a branched cover of degree $d\geq 2$.  Let $\Omega_f$ be the set of critical points of $f$. The {\it post-critical set} of $f$ is the set
	$$
	P_f=\bigcup_{n>0}f^{n}(\Omega_f).
	$$
	The map $f$ is said to be {\it post-critically finite} (PCF) if $|P_f|<\infty$.
	Two PCF branched covers $f,g:S^2\rightarrow S^2$ are said to be {\it combinatorially equivalent} if there exist orientation preserving homeomorphisms $h_1,h_2:(S^2,P_f)\rightarrow (S^2,P_g)$ such that $h_1 f=gh_2$, and $h_1$ is homotopic to $h_2$ relative to $P_f$.
	
	A foundational theorem of William Thurston says that, except for a class of well-understood examples, a PCF branched cover of $S^2$ is either combinatorially equivalent to a rational map or it has a topological obstruction.  A difficulty of applying W. Thurston's ``negative characterization" is that both finding topological obstructions and proving the nonexistence of topological obstructions is challenging in practice.
	
	Recently Dylan Thurston developed a complementary characterization of PCF branched covers of $S^2$ that are combinatorially equivalent to rational maps \cite{thurston_positive}.  He proves that a PCF branched cover $f:S^2\rightarrow S^2$ is equivalent to a rational map if there is an object called an elastic graph that loosens under iterated preimages of a map associated to $f$.  We formalize this characterization below.
	
	The goal of this paper is demonstrate practical application of D. Thurston's ``positive characterization" of rationality. We investigate the mateability of quadratic polynomials.
	
	\p{Elastic structures and embedding energy} Let $\Gamma$ be a graph.  An {\it elastic structure} $\omega$ on $\Gamma$ is an assignment of an elastic length to each edge of $\Gamma$, where the elastic lengths are positive real numbers.  An {\it elastic graph} is a pair $(\Gamma,\omega)$ consisting of a graph $\Gamma$ and an elastic structure on $\Gamma$.

	A {\it graph spine} or {\it spine} for $S^2\setminus P$ is a graph to which $S^2\setminus P$ deformation retracts.  For a PCF polynomial $h$ with post-critical set $P_h$, we will say a graph is a graph spine for $h$ if it is a spine for $\bbC\setminus P_h$.
	
	Let $f:S^2\rightarrow S^2$ be a PCF branched cover with post-critical set $P_f.$  Let $G_0=(\Gamma_0,\omega_0)$ be an elastic graph spine for $S^2\setminus P.$  The preimage of $\Gamma_0$ under $f$ is a graph (a spine for $S^2 \setminus f^{-1}(P) $), call it $\Gamma_1.$  Then the elastic structure $\omega_0$ on $\Gamma_0$ can be pulled back to an elastic structure $\omega_1$ on $\Gamma_1$. Let $G_1=(\Gamma_1,\omega_1)$ be the induced elastic graph spine.  The restriction to $G_1$ %of a composition 
	of $f:(S^2,P)\rightarrow (S^2,P)$ 
	%with a homotopy equivalence of $S^2\setminus P$
	induces a covering map $\pi:G_1\rightarrow G_0$.  The restriction to $G_1$ of the composition of the inclusion map $S^2\setminus f^{-1}(P)\rightarrow S^2\setminus P$ with 
	%a homotopy equivalence of $S^2\setminus P$
	a chosen retraction to $ G_0 $ induces a map $\varphi:G_1\rightarrow G_0$, unique up to homotopy.  The elastic graphs $G_0, G_1$ and pair of maps $\pi,\varphi$ form a {\it virtual endomorphism} $\pi,\varphi:G_1\rightarrow G_0$. Given a PCF branched cover $S^2\rightarrow S^2$, there is a corresponding virtual endomorphism $\pi,\varphi:G_1\rightarrow G_0$ \cite[Thm 3]{thurston_positive}.
	
	We specify D. Thurston's definition of embedding energy in the case of a virtual endomorphism $\pi,\varphi:G_1\rightarrow G_0$.  In this case, the embedding energy is measured on the homotopy class of the inclusion map $\varphi:G_1\rightarrow G_0.$  Let $\psi: G_1\rightarrow G_0$ be a piecewise linear map. The {\it embedding energy} of $\psi$ and its homotopy class $ [\psi] $ are  
	\[ \Emb(\psi) = \underset{y\in G_1}{\esssup} \displaystyle\sum_{x\in\psi^{-1}(y)}|\psi'(x)| 
	\hspace{.2cm} \text{and} \hspace{.2cm}
	\Emb[\psi]=\inf\limits_{\varphi\in[\psi]}\Emb(\varphi).
	\]
	
	We say that a PCF branched cover $f : S^2 \to S^2$ is of {\it hyperbolic type} if each cycle in the post-critical set $P_f$ contains a critical point. Let $\varphi_k$ denote the composition of the inclusion map $i: S^2\setminus f^{-k}(P)\to S^2\setminus P$ and a chosen retraction to $G_0$. We have the following positive characterization.
	
	\begin{thm}[D. Thurston]
		\label{thm:loosening}
		Let $f:S^2\rightarrow S^2$ be a PCF branched cover of hyperbolic type with post-critical set $P_f$. Then 
		$ f $ is equivalent to a rational map iff there is an elastic graph spine $ G $ for $ S^2\setminus P_f $ and an integer $ k>0 $ so that $ \Emb[\varphi_k]<1 $.
	\end{thm}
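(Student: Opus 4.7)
The plan is to sandwich this statement between W.~Thurston's negative characterization and a quantitative duality for embedding energy. Since $f$ is of hyperbolic type, W.~Thurston's theorem reduces rationality to the absence of a Thurston obstruction, i.e.\ an $f$-invariant multicurve whose transition matrix has Perron--Frobenius eigenvalue $\lambda\geq 1$. My strategy is to translate ``no obstruction'' into ``small embedding energy for some pullback,'' and conversely.

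\emph{Necessity} ($f$ rational $\Rightarrow$ $\Emb[\varphi_k]<1$). Fix a rational representative of $f$ and equip $S^2\setminus P_f$ with the Poincar\'e metric. Because each post-critical cycle contains a critical point, every periodic point of $P_f$ is attracting, and $f$ is strictly expanding on a neighborhood of $S^2\setminus P_f$ in the hyperbolic metric; in particular $f^k$ uniformly expands on compacta away from $P_f$ for $k$ large. I would choose $G_0$ to be a spine for $S^2\setminus P_f$ with elastic lengths proportional to hyperbolic edge lengths, then use the hyperbolic expansion of $f^k$ to produce a representative of $[\varphi_k]$ whose pointwise preimages are squeezed into a thin neighborhood of $G_0$. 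The expansion factor drives the local sum of $|\psi'(x)|$ below $1$, so $\Emb[\varphi_k]<1$ for $k$ large.

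\emph{Sufficiency} ($\Emb[\varphi_k]<1 \Rightarrow f$ rational). Assume for contradiction that $f$ admits a Thurston obstruction $\Gamma$ with leading eigenvalue $\lambda\geq 1$. I would test $\Emb[\varphi_k]$ against $\Gamma$ via a duality argument: realize $\Gamma$ geodesically in the flat metric dual to the elastic structure on $G_0$, so that the components of the $k$-th pullback of $\Gamma$ meet $G_1$ transversally with combinatorics governed by the transition matrix. For any piecewise-linear $\psi\in[\varphi_k]$ and a generic $y\in G_1$ on an edge that carries positive intersection with $\psi^{-1}(\Gamma)$, the quantity $\sum_{x\in\psi^{-1}(y)}|\psi'(x)|$ computes the total transverse mass of $\psi^{-1}(\Gamma)$ at $y$, which by Perron--Frobenius is bounded below by $\lambda\geq 1$. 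Taking the essential supremum in $y$ and infimum over $[\psi]$ then forces $\Emb[\varphi_k]\geq 1$, a contradiction; W.~Thurston's theorem then yields rationality.

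The hard part is making the sufficiency step both quantitative and homotopy-invariant. The sum-of-derivatives definition of $\Emb$ is local Lipschitz-type data on graphs, while the Perron--Frobenius bound lives globally on the surface; bridging them requires a genuine min--max duality between embedding energy on elastic graph spines and extremal length of annular neighborhoods of the obstructing multicurve. Establishing that duality --- and verifying that it survives the infimum over $[\varphi_k]$ --- would be the principal technical obstacle, and is precisely where one must import the machinery of \cite{thurston_positive}.
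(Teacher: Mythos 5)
This statement is not something the paper proves at all: Theorem~\ref{thm:loosening} is attributed to D.~Thurston and imported as a black box, with the citation \cite{thurston_positive} standing in for the proof. So there is no ``paper's own proof'' against which to compare your attempt --- the authors simply state and then apply the criterion. You should not be reproving it here.

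That said, since you did produce a sketch, let me note where it falls short of a genuine proof. In the necessity direction, you argue that hyperbolic expansion of $f^k$ ``drives the local sum of $|\psi'(x)|$ below $1$.'' But $\Emb(\psi)$ sums over all preimages $x\in\psi^{-1}(y)$, and $f^k$ has degree $d^k$, so the number of summands grows geometrically; it is not automatic that pointwise contraction of each branch beats the multiplicity. Moreover $\psi$ lives on elastic graphs, and $|\psi'|$ is measured against elastic lengths, not the hyperbolic metric --- the comparison you invoke requires a carefully chosen elastic structure and is a substantial part of D.~Thurston's argument, not a one-line consequence of expansion. In the sufficiency direction, the claim that $\sum_{x\in\psi^{-1}(y)}|\psi'(x)|$ ``computes the total transverse mass of $\psi^{-1}(\Gamma)$'' conflates embedding energy with a geometric intersection number; these are different functionals, and the actual bridge in \cite{thurston_positive} runs through extremal length and Dirichlet-type energies of curve systems, with a genuine min--max duality theorem, rather than a direct Perron--Frobenius comparison at a single generic point $y$. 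Your own closing paragraph acknowledges that this duality is ``precisely where one must import the machinery of \cite{thurston_positive}'' --- which is to say, the proposal defers exactly the step that constitutes the theorem. For the purposes of this paper, the correct move is to cite the result, as the authors do, rather than attempt to re-derive it.
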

	
	We will apply Theorem \ref{thm:loosening} in the setting of matings of polynomials. We first recall some definitions needed to state the main theorem of this paper.
	
	\p{Quadratic Polynomials} 
	The Mandelbrot set is the locus of connectivity for the family of quadratic maps $f_c(z) = z^2+c$ parametrized by $c\in \bbC$. Let $\mathcal{H}$ be a hyperbolic component so that for $c \in \mathcal{H}$, $f_c$ has a $p$-periodic attracting orbit $\mathcal{O}_c$. If $\mathcal{H}'$ is another hyperbolic component such that for $c \in \mathcal{H}'$, $f_c$ has a $qp$-periodic attracting orbit for some $q >1$, and $\overline{\mathcal{H}} \cap \overline{\mathcal{H}'} = \{\hat c\}$, then $\mathcal{H}'$ is a {\it satellite component} of $\mathcal{H}$. In this case, $f_{\hat c}$ has a $p$-periodic parabolic orbit with rotation number $r/q$, where $r$ and $q$ are relatively prime. The parameter $\hat c$ is called the {\it root} of $\mathcal{H}'$. We say a hyperbolic PCF quadratic polynomial $ f $ is in the {\it main molecule} provided the hyperbolic component containing $ f $ is connected to the main cardiod via finitely many satellite components.
	 Hence, for such $ f $, there is a well-defined \emph{tuning sequence} $ \langle f_0=z^2, ..., f_n=f\rangle $, where $f_i$ is the center of the proper $\mathcal{H}_i$. For any PCF map, refer to \cite[\textsection 18]{MilnorBook} to define the {\it external ray} $ \cR_t $ of angle $ t $ via B\"ottcher coordinates.
	 
	 	\p{Matings} Let $f,g:\bbC\rightarrow\bbC$ be PCF hyperbolic polynomials of $ d\geq 2 $. Note then that the \emph{filled-in Julia sets} $ K_f $ (resp. $ K_g $) are connected, locally connected and compact. 
	
	 Denote by $ \widetilde{\C} $ to be the compactification of $ \C $ by adding a circle at infinity: $\widetilde{\bbC}=\bbC\cup\{\infty\cdot e^{2i\pi t}| t\in\bbR/\bbZ\}$. Note $ \widetilde{\C} $ is homeomorphic to a closed disk. We extend $ f $ (resp. $ g $) to a continuous map $ \tilde{f} $ (resp. $ \tilde{g} $) on $ \widetilde{\C}_f $ (resp. $ \widetilde{\C}_g $) by defining $\widetilde{f}(\infty\cdot e^{2i\pi\theta})=\infty\cdot e^{2i\pi d\theta}$ (resp. $ g $).
	  Define an equivalence relation $\sim$ on the disjoint union $\widetilde{\bbC}_f\bigsqcup\widetilde{\bbC}_g$ by identifying $\infty\cdot e^{2i\pi t}$ in $\widetilde{\bbC}_f$ with $-\infty\cdot e^{2i\pi t}$ in $\widetilde{\bbC}_g$ for all $t\in\bbR/\bbZ$. In the language of external rays, this is to say that the external ray $\cR_t^f$ of $f$ is identified with the ray $\cR^g_{-t}$ of $g$ in the circle at infinity, $\bS^1_\infty$. Then the topological space $S^2_{f,g}=\widetilde{\bbC}_f\bigsqcup\widetilde{\bbC}_g/\sim$ is homeomorphic to a sphere with the standard topology. 

	The {\it formal mating of $f$ and $g$}, denoted $f\sqcup g: S^2_{f,g}\rightarrow S^2_{f,g},$ is the branched cover defined by $f\sqcup g|_{\widetilde{\bbC}_f}=\widetilde{f}$ and $f\sqcup g|_{\widetilde{\bbC}_g}=\widetilde{g}$.	If the formal mating $f\sqcup g$ is combinatorially equivalent to a rational map, then we say that $ f $ and $ g $ are mateable.
	
	For further discussion about mateability for quadratic polynomials, see \cite{MatingQuestions}.
	
	\begin{mainthm}\label{thm:main}
		Let $f(z)=z^2+c_1$ be a PCF quadratic polynomial in the main molecule and let $g(z)=z^2+c_2$ be a hyperbolic PCF quadratic polynomial.  Then $f$ and $g$ are mateable if and only if $c_1$ and $c_2$ are not in conjugate limbs of $\cM$.
	\end{mainthm}
	
	When $ f $ and $ g $ are in conjugate limbs, there is a well-documented obstruction to $ f\sqcup g $ being rational (cf. the citations of Rees, Shishikura, Tan below). Hence, we focus on the interesting case of $ f,g $ in non-conjugate limbs.
	
	\p{Tools} In addition to D. Thurston's positive characterization of rational maps, we heavily use (and source notation from) Milnor's work on the characterization of ray portraits of polynomials \cite{Milnor} and
	implicitly use renormalization theory \cite{mcmullen2016complex}.  
	
		\p{Prior work}
		Most notably, Rees, Shishikura, and Tan proved that for $ f,g $ \emph{any} two hyperbolic quadratic polynomials, then $ f $ and $ g $ are mateable if and only if they are in non-conjugate limbs of $ \cM $ (\cite{R}, \cite{S,SL}, \cite{SL, Ta}).
		In particular, they used W. Thurston's negative criterion.
		We use D. Thurston's positive criterion to prove a specialization where one polynomial is in the main molecule.
		
		The construction of ray portraits is topologically similar to Thurston's invariant laminiations. We prefer ray portraits from \cite{Milnor} for the existing lemmas describing parabolic deformation and renormalization. Flek and Keen \cite{FK} use heavily the connection between laminations and ray portraits; Dudko \cite{Du} specifically uses laminations directly to study matings.

        In work yet to be published, Luo studies higher degree matings via laminations, and Park obtains similar results regarding ``core entropy" zero maps, a property related to being in the main molecule.
		
		\p{Outline of paper} For the remainder of the paper, we denote by $ f $ a a hyperbolic PCF quadratic polynomial in the main molecule, and by $ g $ any hyperbolic PCF quadratic polynomial.  In Section \ref{sec:spines} we construct a spine $\Gamma_{f,g}$ for the mating $f\sqcup g$.  In Sections \ref{sec:HubbardElasticStructures} and \ref{sec:RayElasticStructures} we construct elastic structures for $g$ and $f$ respectively.  In Section \ref{sec:LinkingLemma}, we prove the Linking Lemma, which states that the spine of $f$ can be attached to the spine of $g$ in an injective way.  Finally, we prove Theorem \ref{thm:main} in Section \ref{using-linking}.
		
		\p{Acknowledgements}  The authors thank Sarah Koch for suggesting the problem and for helpful conversations. The authors are grateful to Dylan Thurston for useful conversations and clarifications of his theory. Thanks also to InSung Park and Kevin Pilgrim for discussions of the context within the existing literature. 
	\section{The Spine and the Loosening Map}
	\label{sec:spines}
	In this section, we construct a spine for the mating $f \sqcup g$ with respect to some finite invariant set containing $P_f \cup P_g$ that we can control under pullback.
	
	\p{The Spine}
	Given a polynomial and an associated repelling or parabolic orbit, Milnor constructs an \emph{orbit portrait} associated to this orbit \cite{Milnor}.  We sketch this construction for $ f $. Since $ f $ is in the main molecule, let $ \langle f_0,,..., f_m \rangle $ be its tuning sequence, with $ \mathcal{H}_i $ denoting the hyperbolic component containing $ f_i $.
	
	If $ f_i=f_{c_i} $, let $ \hat{c_i} $ be the root of $ \mathcal{H}_i $.
	Then, $ f_{\hat{c}_i} $ has a parabolic cycle $ \mathcal{O}_i=\{z_1,...,z_{p_i}\} $. 
	Milnor defines the \emph{orbit portrait} $\cP_i = \cP(\cO_i)$ to be the collection $\{A_1, \dots, A_{p_i}\}$, where $A_j \subseteq \Q/\Z$ is the union of all angles of external rays landing at $z_j$. We will often abuse notation to say that an angle $ t\in \bS^1_\infty $---the circle at infinity parameterized by $ \R\setminus \Z $---is in $ \cP_i $ if $ t\in \bigcup_{j=1}^{p_i} A_i$. %Denote the total orbit portrait $ \cP(f)=\bigcup_i \cP_i $. 
	We define the \emph{ray portrait} (of generation $ i $) $ \rp^f_i $ to be the collection of dynamical rays for $f$ with angle in $\cP_i$. Denote $ \RP^f\eqdef \bigcup_{i=1}^m \rp^f_i $, and let
	$$\Gamma_f \eqdef \RP^f\cup \bS^1_\infty.$$
	
	Let $V_f \subset \Gamma_f$ be a finite invariant set which contains all the landing points of rays in $\RP^f$ (thus, $\Gamma_f \cap J_f \subset V_f$). 
	We consider $\Gamma_f$ as a finite topological graph whose set of vertices is given by $V_f$.
	
	\begin{prop}\label{orbit-spine}
		The graph $ \Gamma_f$ is a spine for $ f $.
	\end{prop}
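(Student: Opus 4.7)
The strategy is to establish $\Gamma_f$ as a spine by showing it is a connected finite graph embedded in the closed disk $\widetilde{\C}$ whose complementary faces are open topological disks each containing exactly one point of $P_f$; a radial contraction of each punctured face onto its boundary then yields a deformation retraction of $\widetilde{\C}\setminus P_f$ onto $\Gamma_f$, which passes to $\C\setminus P_f$ since the inclusion $\C\setminus P_f\hookrightarrow\widetilde{\C}\setminus P_f$ is a homotopy equivalence.

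Connectedness and planarity are immediate from the general theory of external rays: every ray in $\RP^f$ has one endpoint on the connected circle $\bS^1_\infty$, and distinct external rays of $f$ have disjoint interiors (meeting at most at common landing points in $J_f$), so $\Gamma_f$ is a finite $1$-complex properly embedded in $\widetilde{\C}$ containing $\partial\widetilde{\C}$, whence its complementary components are open disks. To count the faces, I use that the parabolic cycle $\cO_i$ at $\hat c_i$ has period $p_i$ (with $p_1=1$ and $p_{i+1}=p_iq_i$ arising from the satellite bifurcation) and carries $q_i$ rays at each point, so $|\rp^f_i|=p_iq_i$; in particular $|P_f|=p_mq_m$. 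Taking as vertices the $L:=\sum_{i=1}^m p_i$ landing points in $J_f$ together with the $N:=\sum_{i=1}^m p_iq_i$ ray endpoints on $\bS^1_\infty$ (landing cycles at different levels are disjoint because the level-$i$ portrait angles sit in a strict subwake of level $i{-}1$), and as edges the $N$ rays plus the $N$ arcs of $\bS^1_\infty$ between consecutive endpoints, one computes $\chi(\Gamma_f)=L-N$. Combined with $\chi(\widetilde{\C})=1$, telescoping yields
\[
F \;=\; 1+N-L \;=\; 1+\sum_{i=1}^m p_i(q_i-1) \;=\; p_mq_m \;=\; |P_f|.
\]

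Since the face count equals $|P_f|$, it suffices to show no face contains two post-critical points; I would argue this by induction on the tuning length $m$. The base $m=0$ is trivial: $\Gamma_f=\bS^1_\infty$ bounds a single open disk containing $P_f=\{0\}$. For the inductive step, the subgraph $\Gamma'_f:=\bS^1_\infty\cup\bigcup_{i<m}\rp^f_i$ has the same cyclic angular order on $\bS^1_\infty$ as $\Gamma_{f_{m-1}}$ and lands at corresponding cycles (repelling for $f$, parabolic for $f_{m-1}$) in the same wake positions; by the inductive hypothesis its $p_m$ faces are in bijection with the periodic Fatou components of $f_{m-1}$, and under tuning each such component is replaced by a small filled Julia set in $K_f$ containing a $q_m$-cycle of Fatou components of $f$. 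The rays $\rp^f_m$ land on the $p_m$-cycle of small $\alpha$-fixed points of these small Julia sets and cut each into $q_m$ sectors, and the main obstacle is to verify that these sectors are in bijection with the $q_m$-cycle of small Fatou components in a one-per-sector fashion. This last combinatorial claim rests on Milnor's cyclic-order description of orbit portraits together with the renormalization/tuning dictionary that links the parabolic portrait $\cP_m$ at $\hat c_m$ to the landing pattern of the corresponding repelling cycle of $f$; once it is in place, the one-per-face conclusion follows.
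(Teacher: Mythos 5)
Your Euler-characteristic face count is genuinely different from the paper's route and is a nice observation. The paper never counts faces; it instead shows directly (by transporting a single sector) that each sector of generation $m$ contains exactly one post-critical point. Your computation $F = 1+N-L = 1+\sum p_i(q_i-1) = p_m q_m = |P_f|$ is correct (with $p_1=1$, $p_{i+1}=p_iq_i$), and combined with ``no face contains two post-critical points'' it would indeed give the one-per-face conclusion. This is a reasonable alternative skeleton.

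However, what you flag as ``the main obstacle'' is in fact the technical heart of the proposition, and as written you assert it rather than prove it. Concretely, the claim that $\rp^f_m$ partitions each complementary face of $\bS^1_\infty \cup \bigcup_{i<m}\rp^f_i$ into $q_m$ regions each meeting $P_f$ in exactly one point is precisely the content of Lemma~\ref{lem: sector_portrait}, transported by Lemma~\ref{lem:homeo_sector}. The paper's verification runs through several non-obvious steps: (i) Milnor's Lemma~8.1 gives an $S'_{m-1} \subset S_{m-1}$ on which $f^\ell$ is a homeomorphism for $\ell < |P_{f_m}|$; (ii) a parabolic deformation along a path in $\cM$ shows that the angle portrait $\cP_m$ is preserved and that exactly one component of $\rp_m$ lies in $S'_{m-1}$, cutting it into $k=q_m$ pieces; (iii) the cyclic rotation of the immediate-basin components about the small $\alpha$-point forces one post-critical point per piece; and (iv) the homeomorphism from (i) carries this picture to every other generation-$(m-1)$ sector. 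Appealing to ``the renormalization/tuning dictionary'' is not a substitute for steps (ii)--(iii): one must actually check that deformation preserves the angle portrait (Milnor Thm.~4.1, Prop.~4.4), that a unique component of $\rp_m$ enters the characteristic sector, and that the rotation number controls the count. Until you supply that argument, the inductive step has a real gap, even though the surrounding bookkeeping (vertex/edge counts, disjointness of the level-$i$ landing cycles, the telescoping identity) is all sound.
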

	
	\begin{figure}
		\centering
		\includegraphics[width=0.27\textwidth]{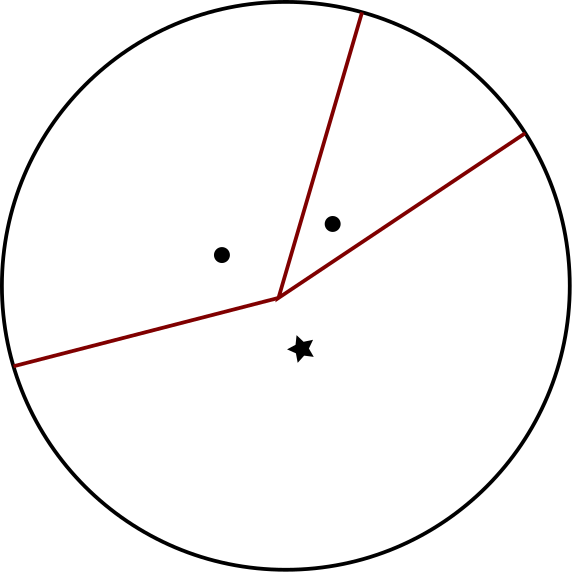}
		\hspace{1cm}
		\includegraphics[width=0.27\textwidth]{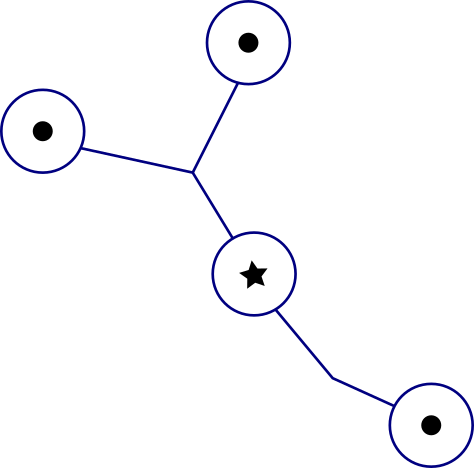}
		\hspace{1cm}
		\includegraphics[width=0.27\textwidth]{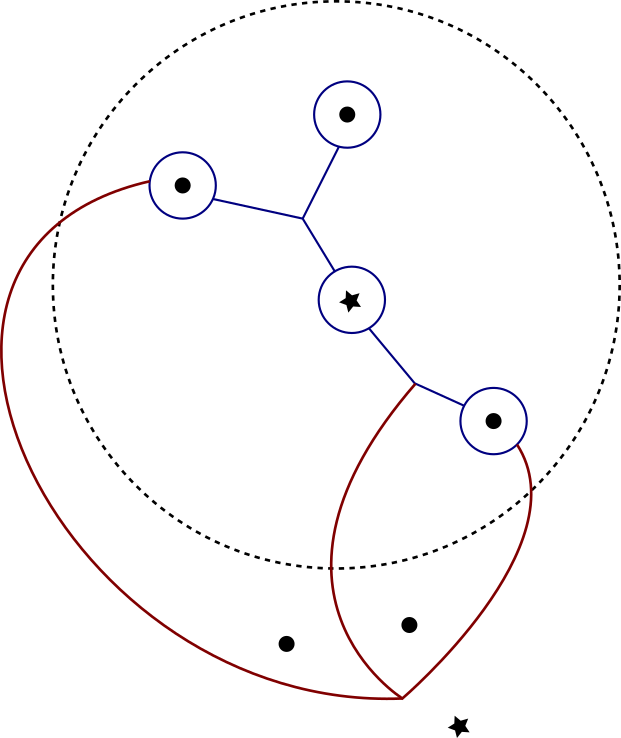}
		\caption{Left: a cartoon of $ \Gamma_f $, the ray portrait spine for the rabbit. Middle: a cartoon of the Hubbard tree spine $ \Gamma_g $ (with $M$=$\Gamma_g)$ for the kokopelli. Bottom: A cartoon of $ \Gamma_{f,g} $, their mating spine. Note that, in the mating, $ \bS^1_\infty $ is drawn for reference and is not in $ \Gamma_{f,g} $.}
		\label{fig:raySpineRabbitBasilica}
	\end{figure}
	
	The proof is an exercise in parabolic deformation and inducting on the length of the tuning sequence. The base case of $i=1$ when $f$ is an $n$-eared rabbit is immediate. In this case, $\Gamma_f$ is $\bS^1_\infty$ plus the $n$-rays meeting at the $\alpha$-fixed point, and each post-critical point lies in a unique region, see Figure 1 (left). For induction, we need two lemmas about renormalization and ``sectors" which will also be useful in sections 4 and 5.
	
	Define a \emph{sector} of generation $ i $ as the closure of a component of $ \tilde{\C}\setminus \bigcup_{1\leq j\leq i}\rp_j$. 
	Let $ S_i $ denote the \emph{characteristic sector}, the sector of generation $i$ containing the critical value of $ f_i $.

	\begin{lem}\label{lem:homeo_sector}
		There is a smaller open set $ S'_i\subset S_i $ containing the critical value of $ f_i$ such that $ f_i^{\ell}|_{S'_i}$ is a homeomorphism for $ 1\leq \ell < |P_{f_i}| $.
	\end{lem}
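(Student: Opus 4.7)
The plan is to combine an elementary chain-rule argument at $c_i$ with the observation that $c_i$ lies in the topological interior of the sector $S_i$, so that a sufficiently small neighborhood of $c_i$ inside $S_i$ will be injective under every iterate $f_i, f_i^2, \ldots, f_i^{N-1}$ simultaneously, where $N := |P_{f_i}|$.

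First I would unpack the hypotheses. Since $f_i$ is the center of the hyperbolic component $\mathcal{H}_i$, the critical orbit of $f_i$ is periodic of exact period $N$, explicitly $0 \to c_i \to f_i(c_i) \to \cdots \to f_i^{N-1}(c_i) = 0$, so $f_i^{k}(c_i) \neq 0$ for $0 \leq k \leq N-2$. By the chain rule,
\[
(f_i^\ell)'(c_i) \;=\; 2^\ell \prod_{k=0}^{\ell-1} f_i^k(c_i) \;\neq\; 0, \qquad 1 \leq \ell \leq N-1,
\]
so each such iterate is a local biholomorphism at $c_i$. The inverse function theorem then yields, for each $\ell$, an open neighborhood $U_\ell \ni c_i$ on which $f_i^\ell$ is a homeomorphism onto its image.

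Next I would verify that $c_i$ lies in the interior of $S_i$: the topological boundary of $S_i$ in $\widetilde{\C}$ is contained in $\bigcup_{j \leq i} \rp_j \cup \bS^1_\infty$, and since external rays lie in the basin of infinity while $c_i$ lies in a bounded periodic Fatou component, $c_i \notin \partial S_i$. Setting
\[
S'_i \;:=\; \mathrm{int}(S_i) \,\cap\, \bigcap_{\ell=1}^{N-1} U_\ell
\]
produces an open neighborhood of $c_i$ contained in $S_i$ on which $f_i^\ell$ is a homeomorphism for every $1 \leq \ell < N$.

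There is no substantial obstacle here: the argument reduces to a chain-rule computation combined with the dynamical observation that the forward $f_i$-orbit of $c_i$ avoids the critical point until the very last step of the period. The only mild delicacy is checking that $c_i$ does not secretly lie on a bounding ray of $S_i$; this is automatic because $c_i$ is a non-escaping periodic point while $\partial S_i$ is built from rays in the basin of infinity.
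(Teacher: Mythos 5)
Your argument is locally correct as a computation, but it proves something weaker than what the paper actually needs, and the gap matters downstream.

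Your chain-rule step is fine: since $f_i$ is the center of its component, the critical orbit $0 \to c_i \to \cdots \to f_i^{N-1}(c_i) = 0$ (with $N = |P_{f_i}|$) avoids $0$ for $0 \le k \le N-2$, so $(f_i^\ell)'(c_i) \neq 0$ and the inverse function theorem hands you neighborhoods $U_\ell$ of $c_i$ on which $f_i^\ell$ is injective. Intersecting with $\mathrm{int}(S_i)$ then gives \emph{some} open set on which all the intermediate iterates are homeomorphisms. Read completely literally, the lemma's wording is satisfied.

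The problem is that the $S'_i$ produced this way can be an arbitrarily small disk around $c_i$, whereas the $S'_i$ the paper actually uses — and which the proof of Lemma~\ref{lem: sector_portrait} and Proposition~\ref{orbit-spine} silently depend on — is the specific subsector furnished by Milnor's Lemma~8.1. That $S'_i$ is obtained combinatorially (essentially as the branch of the preimage of $S_i$ under $f_i^{|P_{f_i}|}$ containing the critical value), it has the property that $f_i^{|P_{f_i}|}: S'_i \to S_i$ is a \emph{degree two} proper map, and the injectivity of the intermediate iterates follows because the full period picks up the unique critical point exactly once. Crucially, this $S'_i$ is large: in Lemma~\ref{lem: sector_portrait}(a)--(c) it must contain the repelling orbit point $z_0 \in \cO_m$ together with the $k$ rays of $\rp_m$ landing there, and in the proof of Proposition~\ref{orbit-spine} the sets $f^\ell(S'_{m-1})$ are used to transport the sector decomposition around the whole postcritical cycle. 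Your shrink-to-a-small-disk $S'_i$ will generically miss $z_0$ and all of $\rp_m$, so the subsequent counting of marked sectors collapses. In short: you have proved the existence of a small injective neighborhood, but the lemma is really a pointer to a canonical, combinatorially defined injective domain, and it is the size and structure of that domain that the rest of Section~2 consumes.
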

	\begin{proof}[Proof.]
		\cite{Milnor}(Lemma 8.1) provides the $ S'_i \subset S_i$, and shows that $ f_i^{|P_{f_i}|}(S'_i) $ is a degree 2 map onto $ S_i $. The lemma follows since there's a unique critical point in $ P_{f_i} $.
	\end{proof}
	
	For the sake of induction, suppose that each sector of generation $m-1$ for $f_{m-1}$ has a unique critical point. 
	Suppose $ |P_{f_{m-1}}|=n $, and consider $\rp_m$ with repelling orbit $\cO_{m}$. The hyperbolic components $\mathcal{H}_{m-1}$ and $\mathcal{H}_{m}$ are separated by a root $\widehat{c}_m$ whose polynomial $f_{\widehat{c}_m}=z^2+\widehat{c}_m$ has parabolic orbit $\cO_{f_{\widehat {c}_m}}$. We know by \cite{Milnor}(Thm 4.1) that $|\cO_{f_{\widehat {c_m}}}| = n$, and therefore that $|\cO_{m}| = n$ and $|P_{f_m}| = n' = kn$, where $k$ is the rotation number of the parabolic cycle $\cO_{f_{\widehat {c}_m}}$. 
	
	\begin{lem}\label{lem: sector_portrait}
		(a) The partial ray portrait $\rp_m \cap S_{m-1}'$ consists of a unique $z_0 \in \cO_m$ together with $k $ rays landing at $z_0$. 
		% $|P_{f_m} \cap S_{m-1}'| = k$. 
		(b) Each component of $S_{m-1}' \setminus \rp_m$ has exactly one point of $P_{f_m}$.
		(c) $ S'_{m-1} $ contains a unique component of $ \rp_m $.
	\end{lem}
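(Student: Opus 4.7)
The strategy is to prove (a), (c), and (b) in that order, leveraging Milnor's theory of characteristic points and sectors \cite[\S 2--4]{Milnor} together with the inductive hypothesis on the generation-$(m-1)$ sector structure. A key background fact that I would use throughout is that the ray portraits $\rp_1,\ldots,\rp_{m-1}$ carry the same angle data for $f_{m-1}$ and $f_m$ under tuning, so the combinatorial action of $f_m$ on the generation-$(m-1)$ sectors mirrors that of $f_{m-1}$.

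For (a), I would first apply Milnor's Theorem 4.1 to see that $\cO_m$ is a repelling cycle of period $n$ with $k$ rays of $\rp_m$ landing at each orbit point, and identify the \emph{characteristic point} $z_0 \in \cO_m$ whose landing rays bound the arc at infinity containing the angle of the critical value of $f_m$. Because this characteristic arc is contained in the arc at infinity bounding $S_{m-1}$, the point $z_0$ lies in $S_{m-1}$; shrinking $S'_{m-1}$ about $z_0$ if necessary preserves the homeomorphism conclusion of \lemref{lem:homeo_sector}. The $k$ rays landing at $z_0$ then lie entirely inside $S_{m-1}$, since their angles are confined to the arc at infinity of $S_{m-1}$ and distinct external rays cannot cross the boundary rays of $S_{m-1}$ (which live in strictly earlier generations). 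After a further shrinking of $S'_{m-1}$ toward $z_0$, these $k$ rays all lie in $S'_{m-1}$.

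For (c), the remaining periodic points $f_m^\ell(z_0)$ for $1 \leq \ell < n$ lie in the $n-1$ generation-$(m-1)$ sectors different from $S_{m-1}$: by the shared sector combinatorics of $f_{m-1}$ and $f_m$, combined with the inductive hypothesis that the post-critical set of $f_{m-1}$ visits each such sector exactly once, the cycle $\cO_m$ is distributed one point per sector. Hence no periodic point other than $z_0$ lies in $S'_{m-1}$, and the unique component of $\rp_m$ meeting $S'_{m-1}$ is $\{z_0\}$ together with its $k$ landing rays from (a). For (b), these $k$ rays cut $S'_{m-1}$ into exactly $k$ connected regions. Under tuning, the single post-critical point of $f_{m-1}$ in $S_{m-1}$ is replaced by $k$ post-critical points of $f_m$ arranged around $z_0$ in the rotation-number-$k$ pattern of the parabolic cycle of $f_{\hat c_m}$; the $k$ rays at $z_0$ separate them one per region, and a direct count against $|P_{f_m}| = kn$ confirms that each of the $k$ regions gets exactly one.

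The main obstacle will be precisely pinning down $z_0$ and its $k$ landing rays inside $S'_{m-1}$: identifying $z_0$ as Milnor's characteristic point and showing that the successive shrinkings of $S'_{m-1}$ to accommodate both the point and its rays remain compatible with \lemref{lem:homeo_sector} requires careful bookkeeping. Once (a) is secured, parts (b) and (c) reduce to straightforward counting and the combinatorial sector dynamics flowing from the inductive hypothesis.
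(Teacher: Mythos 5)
Your proposal takes a genuinely different route from the paper's. For part (a), the paper invokes Milnor's parabolic deformation machinery (Prop.\ 4.4 and Thm.\ 4.1): deforming $f_{m-1}\rightsquigarrow f_{\widehat c_m}\rightsquigarrow f_m$ along a path in $\cM$, noting that the angle portrait $\cP_m$ is preserved, and concluding directly that $\rp_m$ divides the \emph{fixed} set $S'_{m-1}$ into $k$ components. You instead identify $z_0$ via Milnor's notion of characteristic point, argue by angle confinement and non-crossing of external rays that $z_0$ and its $k$ rays lie in $S_{m-1}$, and then shrink $S'_{m-1}$ to suit. For (c), the paper treats it as an immediate consequence of (a) (each connected component of $\rp_m$ is a star at a single point of $\cO_m$), whereas you argue separately that the other $n-1$ points of $\cO_m$ are distributed one to each of the remaining generation-$(m-1)$ sectors. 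Your (b), by contrast, is essentially the paper's argument phrased in terms of the tuning replacement and rotation pattern rather than the cyclic action of $f_m^n$ on immediate-basin components.

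The genuine gap is the repeated ad hoc shrinking of $S'_{m-1}$. That set is not a free parameter: it is the specific neighborhood supplied by Milnor's Lemma 8.1 via \lemref{lem:homeo_sector}, which must contain the critical value of $f_m$ and on which $f_m^\ell$ is a homeomorphism, and which in the proof of \propref{orbit-spine} must also contain \emph{all} $k$ of the post-critical points of $f_m$ surrounding $z_0$ so that each of the $k$ sub-sectors in (b) carries exactly one. Shrinking $S'_{m-1}$ ``toward $z_0$'' -- a point on the Julia set -- risks excluding the critical value (breaking \lemref{lem:homeo_sector}) and the attracting periodic points in the adjacent Fatou components (breaking (b)), and in any case cannot make rays that were outside $S'_{m-1}$ suddenly lie inside it. The paper's parabolic-deformation argument sidesteps this entirely by establishing the $k$-fold subdivision of the \emph{given} $S'_{m-1}$ directly, and this is precisely what your approach still owes: you need to show that $z_0$ and its $k$ rays already intersect the $S'_{m-1}$ of \lemref{lem:homeo_sector} as stated, with no freedom to modify it. Absent that, your (a) and (b) do not both hold for a single choice of $S'_{m-1}$.
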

	\begin{proof}
		(a): Under the deformation from $f_{m-1}$ to $f_{\widehat c_m}$ along a path in $\cM$ connecting them, the repelling cycle of period $n' = kn$ deforms to the parabolic cycle $\hat{O}\subset\cO_m$ of $f_{\widehat c_m}$, cf., \cite{Milnor} (Prop 4.4).
		Furthermore, the rays landing at this cycle split $S_{m-1}'$ into $k$ components. The angle portrait $\cP(\widehat{O})$ is equal to $\cP_m$. So, in the deformation from $f_{\widehat c_m}$ to $f_m$, each map along the deformation path has the same angle portrait $\cP_m$ and an attracting cycle of period $n'$, cf. \cite{Milnor} (Thm 4.1). Therefore, the ray portrait $\rp_m$ of $f_m$ divides $S_{m-1}'$ into $k$ components, one of which contains the critical value $c$.
		
		(b): The map $f_m^n$ takes the components of the immediate basin of $P_{f_m}$ to one another cyclically with rotation number $k$. Therefore each component of $S'_{m-1} \setminus \rp_m$ necessarily contains exactly one point of $P_{f_m}$.
		
		(c): Each component of $ \rp_m $ must land at a common point in $ \cO_m $, apply (a).
	\end{proof}
	
	%Now it is straight-forward to prove \propref{orbit-spine}.
	\begin{proof}[Proof of Prop 2.1]
		%We show there is exactly one post-critical point in each $ m- $sector. 
		Combine Lemma \ref*{lem: sector_portrait}(b) and (c) to see that $ S'_{m-1} $ contains exactly $ k $ $ m $-sectors, each with a unique post-critical point. Then, use Lemma
		$ \ref{lem:homeo_sector} $ and the appropriate $ \ell $ so the homeomorphism $ f^{\ell} $ transports this sector to every other sector of generation $ (m-1)$. It follows every sector of generation $m$ has a unique post-critical point.
	\end{proof}
	
	Equipped with a spine for $f$, we are close to a spine for the mating, $ f\sqcup g $. What remains needed is a spine for $ g $ and a suitable method for gluing the spines together.
	
	Let $ K_g $ denote the filled-in Julia set for $ g $.
	Per \cite{Po}, an arc $ \gamma: [0,1]\to K_g $ is \emph{regulated} provided that its intersection with any bounded Fatou component $ F $ of $ g $ is empty or consists of radial segments with respect to the B\"ottcher coordinates on $ F $. Recall also that for a set $ X\subset K_g $, its \emph{regulated hull} $ [X] $ is the minimal regulated set containing $ X $. 
	Note the Hubbard tree is $ [P_g] $.
	
	For $ Y\subset K_g $ connected, the valence $ \nu_Y(z) $ of $ z\in Y $ is the number of connected components of $ Y\setminus \{z\} $.
	Let $ M\subset K_g $ be a finite invariant set containing $ P_g $. Let $ F_M $ denote the union of every Fatou component that contains a point in $ M $. Define the \emph{Hubbard spine} of $ g $ (whose dependence on $ M $ shall often be omitted for brevity) to be
	\begin{align*}
		\Gamma_g = \Gamma_g(M) \eqdef \partial\left(F_M\cup [M]\right)
	\end{align*}

	Endowing $ \Gamma_g $ with the vertex set $ V_{\Gamma_g}=V_{\Gamma_g}(M)\eqdef \left([M]\cap \partial F_M\right)\cup \{z: \nu_{\Gamma_g}(z) >2\} $, $\Gamma_g$ is a finite graph. It is immediate that $ \Gamma_g $ is a spine for $ g $ with respect to $M$.
	
	Finally, recall that for the mating $ f\sqcup g $, for angle $ t\in \R/\Z $, the external ray $\cR^f_t $ of $ f $ is identified with the ray  $ \cR^g_{-t}$ of $ g $ with angle $ -t $. So, the mated ray $ \cR^{f,g}_t\eqdef \cR^f_t\cup \cR^g_{-t}$ lands in $ \Gamma_g $ when thought of as a function of $ \cR^f_t $. Let $ \RP_{f,g} \eqdef \{\cR^{f,g}_t \text{ for } t\in \cP(f)\}$.
	
	\begin{prop}\label{mating-spine}
		The graph $ \Gamma_{f,g} \eqdef \RP_{f,g}\cup \Gamma_g(M) $ is a spine for $ S^2\setminus\{P_f \cup M\}$.
	\end{prop}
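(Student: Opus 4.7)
The plan is to build a deformation retraction of $S^2_{f,g}\setminus(P_f \cup M)$ onto $\Gamma_{f,g}$ in two stages, gluing the $f$-side and $g$-side spines along their common circle $\bS^1_\infty$. By Proposition~\ref{orbit-spine} there is a deformation retraction $r_f : \widetilde{\C}_f \setminus P_f \to \Gamma_f = \RP^f \cup \bS^1_\infty$. Because $\bS^1_\infty \subset \Gamma_f$, the retraction $r_f$ fixes $\bS^1_\infty$ pointwise, so extending it by the identity on $\widetilde{\C}_g \setminus M$ yields a deformation retraction of $S^2_{f,g} \setminus (P_f \cup M)$ onto $\RP^f \cup (\widetilde{\C}_g \setminus M)$, where $\RP^f$ is attached to $\widetilde{\C}_g$ at the endpoints of its rays on $\bS^1_\infty$.

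Next, I would retract the $g$-side via a modification of the Hubbard-spine retraction. By the observation immediately preceding the proposition, for each $t \in \cP(f)$ the $g$-ray $\cR^g_{-t}$ lands at a point of $\Gamma_g(M)$, so the enlarged graph $\Gamma_g^+ := \Gamma_g(M) \cup \{\cR^g_{-t} : t \in \cP(f)\}$ is a finite subgraph of $\widetilde{\C}_g$. Since $\Gamma_g^+$ differs from $\Gamma_g(M)$ only by finitely many arcs, each joining a point of $\Gamma_g(M)$ to a point of $\bS^1_\infty$, one can surger the deformation retraction of $\widetilde{\C}_g \setminus M$ onto $\Gamma_g(M)$ into a deformation retraction $r_g^+$ onto $\Gamma_g^+$ that fixes each $\cR^g_{-t}$ pointwise---for instance by decomposing $\widetilde{\C}_g \setminus \Gamma_g^+$ into closed topological disks and retracting each onto its boundary in $\Gamma_g^+$. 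The attaching points of $\RP^f$ on $\bS^1_\infty$ are identified with points $-\infty \cdot e^{-2\pi i t}$ on the rays $\cR^g_{-t} \subset \Gamma_g^+$ and are hence fixed, so $r_g^+$ extends by the identity on $\RP^f$ to a deformation retraction onto $\RP^f \cup \Gamma_g^+ = \RP_{f,g} \cup \Gamma_g(M) = \Gamma_{f,g}$.

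Composing the two stages gives the desired deformation retraction. The crux---and the main obstacle---is the landing observation: for every $t \in \cP(f)$, the $g$-ray $\cR^g_{-t}$ must terminate in $\Gamma_g(M)$, so that $\Gamma_g^+$ sits inside $\widetilde{\C}_g$ compatibly with $\Gamma_g(M)$. Once this is granted, both the surgery producing $r_g^+$ and the compatibility of the two retractions along $\bS^1_\infty$ are standard.
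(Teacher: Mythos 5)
Your proposal is correct and uses essentially the same ingredients as the paper: that $\Gamma_f$ is a spine on the $f$-side (Prop.~\ref{orbit-spine}), that arcs of $\bS^1_\infty$ retract into $\Gamma_g(M)$ (which is what makes stage~2 work), and that the $g$-side rays $\cR^g_{-t}$ land on $\Gamma_g(M)$. You simply make the paper's informal region-by-region argument explicit as a two-stage composition of deformation retractions; the one caveat is that your phrase ``retracting each onto its boundary in $\Gamma_g^+$'' should be read as retracting onto the part of the boundary lying in $\Gamma_g^+$ (which is always a proper arc, either because the region contains a puncture of $M$ or because part of its boundary lies on $\bS^1_\infty$), since a closed disk of course cannot retract onto its entire boundary circle.
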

	\begin{proof}
		The mating $ f\sqcup g $ has exactly 2 super-attracting cycles, according to the dynamics of $ f $ and $ g $. 
		The hyperbolic components of $ g $ remain hyperbolic components in the mating, so $ \Gamma_g(M)\subset \Gamma_{f,g} $ still accounts for the points $M$.
		For $ \Gamma_g(M)\subset \tilde{\C}_g $, arcs in $ \partial\C_g = \bS^1_\infty $ retract to arcs within $ \Gamma_g(M) $. 
		Hence, for any $ z_j\in P_f $, the region in $ \Gamma_f $ containing $ z_j $ bounded by a collection of $ \{\cR_J^f\} $ and a subset of $ \bS^1_\infty $ becomes a region in $ \widehat{\C} $ bounded by $ \{\cR^{f,g}_J\} $ and a corresponding subset of $ \Gamma_g(M) $. 
	\end{proof}
	
		\begin{figure}
		\centering
		\graphicspath{{figures/}}
		\def\svgwidth{\textwidth}
		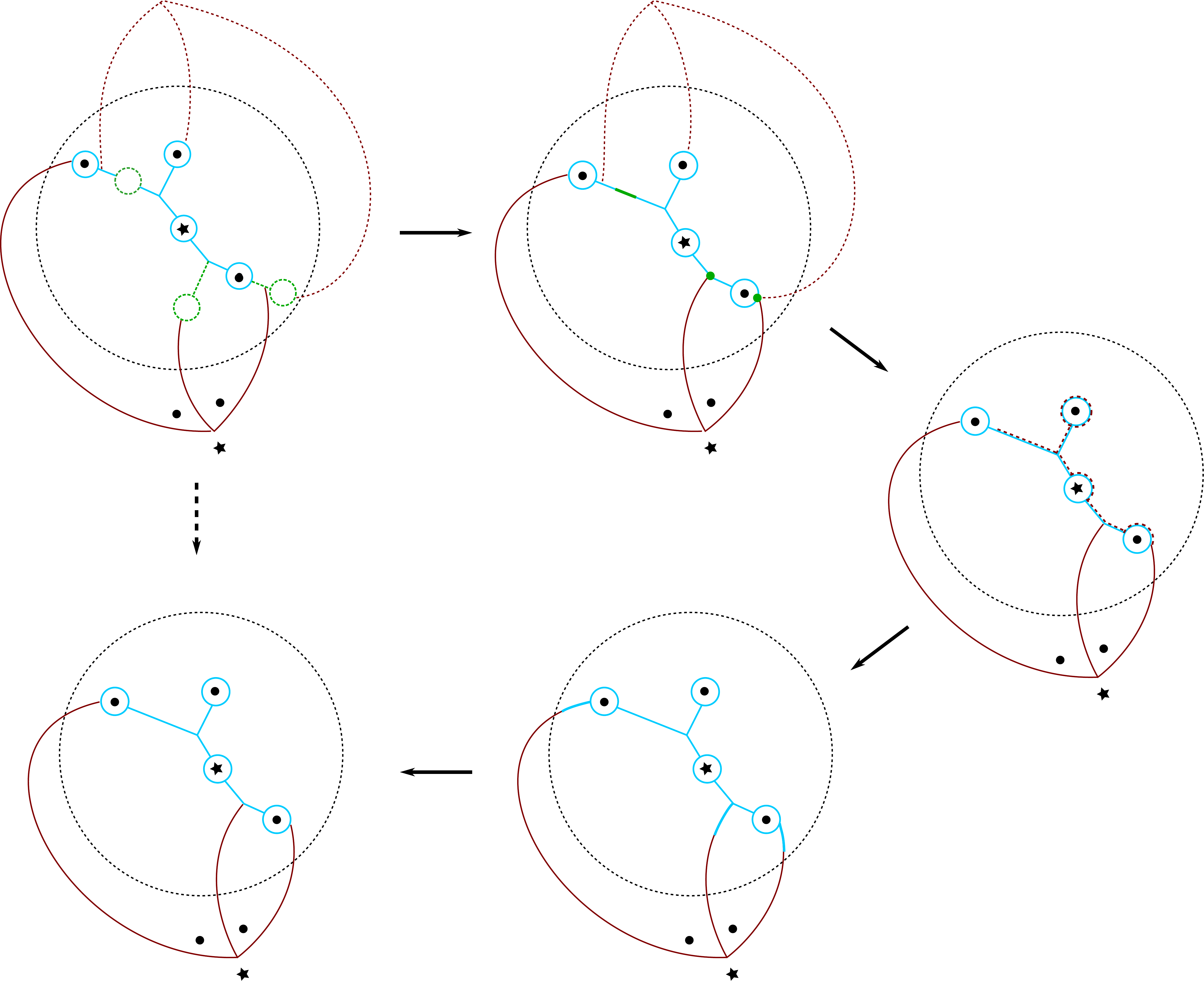
		\caption{The map $\rho:(f\sqcup g)^{-N}(\Gamma_{f,g}\rightarrow\Gamma_{f,g}$ is a composition of four maps.  The composition is shown here for where $f$ is the rabbit polynomial, $ g $ the Kokopelli polynomial, and $N=1$.}
		\label{fig:rhoMaps}
	\end{figure} 
	
	We are not yet prepared to define rigorously the ``loosening" map $\rho:(f\sqcup g)^{-N}(\Gamma_{f,g}) \to \Gamma_{f,g}$, which, for some $N$ and some metric endowed upon $\Gamma_{f,g}$, will satisfy $\Emb[\rho]<1$. There does not appear to be a straightforward way of defining either this map or the underlying elastic structures, as they are most naturally thought of as a composition of 4 component parts, each of which requires a section's worth of detail to understand.

    Conversely, the components for the construction of this loosening map (see Figure 2) provide the best motivation for what is done in the following sections. Embedding energy is sub-multiplicative, so to show that $\Emb[\rho]<1$, it suffices to show this bound for each of the component maps. 
    Before undertaking this course, we provide definitions for pullbacks of elastic structures each section will need.
    
    	\p{Pullbacks and collapse maps for elastic graphs}
	Let $h: S^2 \to S^2$ be a PCF branched cover with postcritical set $P_h$ and let $\Gamma \subseteq S^2 \setminus P_h$ be a spine. Endow each edge $e$ of $\Gamma$ with an elastic length $\omega(e) \in \bbR_+$, so that $G=(\Gamma, \omega)$ is an elastic graph.  
	Hence, there is a virtual endomorphism $\pi,\varphi:h^{-1}(\Gamma)\rightarrow \Gamma$ associated to $h$, where $\pi$ is the restriction of $h$ to $h^{-1}(\Gamma)$ and $\varphi$ is the restriction of the inclusion map to $h^{-1}(\Gamma)$.
	
	Define the \emph{pullback of $G$ under $h$} to be the elastic graph
	\[ h^*G \coloneq (h^{-1}(\Gamma), h^*\omega),\]
	where $h^*\omega \coloneq \omega \circ h(E)$ for every edge $E \subseteq h^{-1}(\Gamma)$.
	
	We define the \emph{depth $q$ elastic collapse graph} $(\varphi_q)_*(h^q)^*(G)$ to be the elastic graph
	\[(\varphi_q)_*(h)^*(G)=\left(\Gamma,\varphi_*(h^q)^*(\omega)\right). \] 
	We note that this pushforward elastic structure $\varphi_*(h^q)^*(\omega)$ is a function of $(h^q)^*(\omega)$, but it is not unique and proving statements often requires good choices.

	\section{Elastic Structures for $ \Gamma_g $}
	\label{sec:HubbardElasticStructures}
    In this section, we construct a continuous map $ \varphi_n^g: g^{-n}(\Gamma_g)\to \Gamma_g $ so that $ \Emb[\varphi^g_n]\leq 1 $. Then, we show that for any elastic structure on $ \Gamma_g $, there exists $N$ so that under $N$ pullbacks, the induced elastic collapse structure will induce arbitrarily large weights on each edge of $\Gamma_g$. With respect to the mating spine pullbacks, this is where the key loosening happens: Section \ref{using-linking} will show how to supply an $ \e $ amount of surplus elasticity to the external rays of $ \RP_{f,g} $.

	\subsection{From the Julia set to the Hubbard Spine}
	
	Recall that $ M $ is a finite invariant set containing $ P_g $ such that $ M\setminus P_g\subset J_g $ (equivalently, $ F_M= F_{P_g} $), and its regulated hull $ [M] $ is the minimal regulated set containing $ M $. The \emph{branch points} of $ X $ are those $ z\in X $ with valence $ \nu_{X}(z)>2 $. Recall the vertices $ V_M $ of $ \Gamma_g $ consist of the branch points of $ [M] $ as well as the points of intersection between $ F_M $ and $ [M] $. 
	
	\begin{lem} \label{valence} (a) If $ z $ is a branch point of $ J_g $ then $ z $ is pre-periodic under $ g $\\
		(b) If $ z\in [M]\setminus M $, then $ \nu_{[M]}(z)\geq 2 $
	\end{lem}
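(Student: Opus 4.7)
The plan is to handle the two parts independently: part (a) uses that $g$ is a local homeomorphism on $J_g$ combined with the landing theory of external rays, while part (b) is a minimality argument for the regulated hull.

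For part (a), I would first observe that since $g$ is hyperbolic and PCF, its unique critical point lies in the Fatou set, so $g$ acts as a local homeomorphism at every point of $J_g$. Consequently, the topological valence is preserved: $\nu_{J_g}(g(z)) = \nu_{J_g}(z)$, so every iterate $g^n(z)$ is a branch point of $J_g$ of the same valence $k \geq 3$. Because $J_g$ is locally connected (Douady--Hubbard for hyperbolic PCF polynomials), this valence equals the cardinality of the finite set $A(z) \subset \R/\Z$ of external angles landing at $z$. The key input is that for a hyperbolic PCF quadratic polynomial, whenever two or more external rays land at a common point of $J_g$, the corresponding angles are rational; this follows from classical landing theorems together with the identification of $J_g$ with the quotient of $S^1$ by the rational lamination. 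Granting this, $A(z)$ is a finite subset of $\Q/\Z$, and since angle doubling is pre-periodic on $\Q/\Z$, the orbit of $A(z)$ under $\mu_2$ is eventually periodic in the space of finite subsets of $\Q/\Z$; hence $z$ is pre-periodic under $g$.

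For part (b), I would argue by contradiction using the minimality of $[M]$ as a regulated set containing $M$. Suppose $z \in [M] \setminus M$ has $\nu_{[M]}(z) \leq 1$. If $\nu_{[M]}(z) = 0$, then $z$ is isolated in $[M]$; connectivity of $[M]$ then forces $[M] = \{z\}$, so $M \subseteq \{z\}$, but $z \notin M$ yields $M = \emptyset$, a contradiction. If $\nu_{[M]}(z) = 1$, then $z$ is the endpoint of a unique embedded arc $\alpha \subseteq [M]$ terminating at $z$; since $z \notin M$ and $M$ is finite, a sufficiently short subarc $\alpha' \subset \alpha$ ending at $z$ is disjoint from $M$, and removing the relative interior of $\alpha'$ yields a strictly smaller regulated set still containing $M$, contradicting the minimality of $[M]$.

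The principal obstacle lies in part (a): justifying that rays landing together at a branch point of $J_g$ must have rational angles. I plan to cite this from Douady--Hubbard landing theory and the combinatorial rigidity of hyperbolic PCF polynomials (see, e.g., \cite{Milnor}, \cite{mcmullen2016complex}) rather than rederive it from scratch. Part (b) is routine once minimality is used correctly; the only subtle point is ensuring $[M]$ is connected, but this is built into the definition of the regulated hull of the nonempty set $M$.
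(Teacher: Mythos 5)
Part (b) of your proposal is essentially the paper's argument. The paper proves the contrapositive directly: if $\nu_{[M]}(z)\leq 1$ then $[M]\setminus\{z\}$ is connected and regulated and still contains $M$, so minimality of the regulated hull forces $z\in M$. Your contradiction version is the same idea. One small correction: deleting the relative interior $(w,z)$ of the terminal subarc $\alpha'=[w,z]$ (with $z$ the valence-one endpoint) leaves $z$ behind as an isolated point, so the remainder is disconnected; you should delete the half-open arc $(w,z]$ instead, which leaves a closed, connected, regulated subset of $[M]$ that still contains $M$.

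Part (a) takes a genuinely different route. The paper disposes of it by citing \cite{Po} (Prop.\ 3.6), whereas you argue through the lamination: local connectivity identifies the valence of $z$ with the cardinality of the angle set $A(z)$, $A(z)$ is claimed to be a finite subset of $\Q/\Z$, and rational angles are pre-periodic under doubling. The logic is sound once the key claim is granted, but note that the claim ``branch-point angle sets are rational'' is \emph{not} a consequence of landing theory alone --- it is essentially Thurston's No Wandering Triangle theorem for quadratic invariant laminations. That theorem says that a gap with three or more vertices is pre-periodic, which already gives pre-periodicity of $z$ directly; rationality of $A(z)$ is then a corollary, not a stepping stone. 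The finiteness of $A(z)$, which you use implicitly, comes from the same source (an infinite-sided pre-periodic gap bounds a Fatou component, so it is not a branch point of $J_g$). So your argument is a valid reduction, but the black box it invokes is at least as strong as the statement being proved, and it would be cleaner to cite No Wandering Triangle (or Poirier, as the paper does) for pre-periodicity outright. The valence-preservation observation $\nu_{J_g}(g(z))=\nu_{J_g}(z)$ is correct, but it is not actually needed once one knows $A(z)$ is finite and rational.
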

	\begin{proof}
		Part (a) is \cite{Po} (Prop 3.6). Part (b) is proved by contrapositive: If $ z $ has valence one, then $ [M]\setminus \{z\} $ is connected hence remains regulated. By definition, a regulated set containing $ M $ contains $ [M] $. As $ [M]\setminus \{z\}\subsetneq [M] $, $ z\in M $.
	\end{proof}

	Let $ C\subset K_g $ denote the union of $ [M] $ and the closure of every Fatou component $ U $ intersecting $ [M] $. Since any infinite sequence of Fatou components $ (U_n) $ has $ \diam U_n\to 0$ as $ n\to\infty $, $C $ is closed. 
	The boundary $ \partial C $ makes a useful intermediary within the construction a collapse map from $ g^{-n}(\Gamma_g)\to \Gamma_g $. Call $ \langle M\rangle \eqdef \partial C \subset J_g$ the \emph{chain hull} of $ M $. 
	
	First, we define a function that indicates for any $ z\in J_g $ which branch point of $ \langle M\rangle $ is closest to $ z $. 
	In \cite{Po} it is shown that for $ z_0, z_1\in K_g $, there exists a unique simply-connected, regulated arc with endpoints $ z_0 $ and $ z_1 $. Denote this arc by $ [z_0,z_1]\subset K_g $. 
	Then, let $\phi_{\langle M\rangle}$ be the identity on $ \langle M\rangle $ and take $z\in J_g\setminus \langle M\rangle $ to the pre-periodic point in $\langle M\rangle$ such that $[z,\phi_{\langle M\rangle}(z)]$ is minimal over the set of pre-periodic points in $\langle M\rangle$.
	
	\begin{prop}\label{collapsetree}
		The map $\phi_{\langle M\rangle}: J_g\to \langle M\rangle $ is well-defined and continuous.
	\end{prop}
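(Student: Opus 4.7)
The plan is to interpret $\phi_{\langle M\rangle}$ as the natural tree-like projection of $z$ onto $\langle M\rangle$, verify the candidate point is pre-periodic so that the minimization has a valid target, and then establish continuity from the shrinking diameter of regulated arcs near $\langle M\rangle$.

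First I would reduce the domain by observing that $J_g \setminus \langle M\rangle = J_g \setminus C$: since $J_g$ has empty interior in $\C$ while $\operatorname{int}(C)$ consists of the open Fatou components in $F_M$, every point of $J_g \cap C$ lies in $\partial C = \langle M\rangle$. For $z \in J_g \setminus C$, let $W$ be the component of $K_g \setminus C$ containing $z$. Using the dendrite-like structure of $K_g$ modulo Fatou components (a consequence of the uniqueness of regulated arcs from \cite{Po}), I would show that $\overline{W} \cap C$ is a single point $a(z)$, which lies in $\partial C = \langle M\rangle$. Then for every $y \in \langle M\rangle$ the regulated arc $[z,y]$ crosses $\partial W$ at $a(z)$, so $[z, a(z)] \subseteq [z, y]$; thus $a(z)$ is inclusion-minimal among arcs $[z, y]$ for $y \in \langle M\rangle$, and in particular among arcs to pre-periodic points of $\langle M\rangle$.

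To match the stated definition I would verify $a(z)$ is pre-periodic. If $a(z) \in M$, this follows from invariance and finiteness of $M$ under $g$. Otherwise $a(z)$ is a branch point of $J_g$ in the valence sense: it contributes one direction into $W$ (a distinct component of $J_g \setminus \{a(z)\}$ by construction), and by Lemma~\ref{valence}(b) when $a(z) \in [M]\setminus M$ (or by the local Jordan-arc structure of $\partial F_M$ when $a(z) \in \partial F_M \setminus [M]$) at least two further directions into $\langle M\rangle$. Hence $\nu_{J_g}(a(z)) \geq 3$ and Lemma~\ref{valence}(a) yields pre-periodicity.

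For continuity I would split cases. If $z \in \langle M\rangle$ then $\phi_{\langle M\rangle}(z) = z$, and since $\phi_{\langle M\rangle}(z_n) \in [z_n, z]$ (applying the projection construction with $y = z$) while $\diam[z_n, z] \to 0$ by local connectivity of $K_g$, we get $\phi_{\langle M\rangle}(z_n) \to z$. If $z \notin \langle M\rangle$ then the component $W$ containing $z$ is open in $J_g$, so eventually $z_n \in W$ and $\phi_{\langle M\rangle}(z_n) = a(z) = \phi_{\langle M\rangle}(z)$. The main obstacle I anticipate is the structural claim that $\overline{W} \cap C$ is a single point: a clean proof most likely proceeds by descending to the dendrite quotient of $K_g$ obtained by collapsing Fatou-component closures, observing that $C$ descends to a subtree, and lifting the standard dendrite projection back to $K_g$.
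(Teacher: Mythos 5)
Your overall architecture matches the paper's: define a natural projection onto $\langle M\rangle$, verify the target is pre-periodic via the valence lemmas, and get continuity from local constancy on complementary components. The technical realization of the projection differs, though. You take the topological route: look at the component $W$ of $K_g\setminus C$ containing $z$ and claim $\overline{W}\cap C$ is a singleton $a(z)$ — a claim you correctly flag as the main obstacle and defer to a dendrite-quotient argument. The paper instead invokes Poirier's Lemma 2.5 directly: since $[M]$ and $[M\cup\{z\}]$ are trees, there is a unique $w\in[M]$ with $[M\cup\{z\}]=[M]\cup[w,z]$, and the image is then $w$ itself if $w\in\langle M\rangle$, or else the unique intersection $\partial U\cap[w,z]$ when $w$ lies interior to a Fatou component $U$ meeting $[M]$. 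This sidesteps your singleton claim entirely and is the cleaner path given the tools already cited; if you keep your version, you do need to actually prove the singleton claim (the dendrite-quotient sketch would do it, but it's real work you've not yet written down). One thing your write-up does better: the paper's continuity argument only explicitly covers $z\notin\langle M\rangle$ via local constancy on the component of $J_g\setminus\{\phi_{\langle M\rangle}(z)\}$ containing $z$, whereas you also handle $z\in\langle M\rangle$ via the observation that $\phi_{\langle M\rangle}(z_n)\in[z_n,z]$ and $\diam[z_n,z]\to 0$ by local connectivity — that second case is genuinely needed and worth stating.
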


	\begin{proof}
		Assume $ z\not\in \langle M\rangle $. By \cite{Po}(Lem 2.5), both $ [M] $ and $ [M\cup \{z\}] $ are trees, hence, there is a unique $ w\in [M] $ such that $ [M\cup\{z\}] = [M] \cup [w,z]$. Moreover, either $ w\in \langle M\rangle $ or $ w\in U $, where $U$ is some Fatou component intersecting $ [M] $. In the former case we have $ \phi_{\langle M\rangle}(z)$ is $w $. In the latter case, $ \phi_{\langle M\rangle} $ is the unique point of intersection $ \partial U\cap [w, z] $. Thus $ \phi_{\langle M\rangle} $ is well-defined.
		
		Next, analyzing $ \langle M\rangle $, either $ \phi_{\langle M\rangle}\in [M]\setminus M $ or $ \phi_{\langle M\rangle} $ is in the boundary of some Fatou component. In the former case we calculate
		$$ \nu_{J_g}(\phi_{\langle M\rangle}(z)) \geq \nu_{[M\cup \{z\}]}(\phi_{\langle M\rangle}(z)) \geq \nu_{[M]}(\phi_{\langle M\rangle}(z))+1 \geq 3$$
		where the last inequality follows from Lemma~\ref{valence}(b).  Then Lemma~\ref{valence}(a) implies $ \phi_{\langle M\rangle}(z) $ is pre-periodic. In the latter case, $ z\in \partial U $ is a branching point in $ J_g $, hence again by Lemma~$ \ref{valence}(a) $, $\phi_{\langle M\rangle}(z)$ is pre-periodic.
		
		Lastly, continuity follows from the fact that the entire component of $J_g \setminus \{\phi_{\langle M \rangle}(z)\}$ that contains $z$ maps to $\phi_{\langle M \rangle}(z)$ (and hence, $\phi_{\langle M \rangle}$ is constant on that component).
	\end{proof}

	To complete the construction of our collapse map, we introduce a continuous map $ \psi: \langle M\rangle \to \Gamma_g $.
	Let $ (U_n) $ be the collection of Fatou components which non-trivially intersect $ [M] $. Let $ (U'_k) $ is the subsequence of of $(U_n)$ consisting of the components which are disjoint from $ M $.  Then
	$$ \langle M\rangle = \partial([M]\cup F_M) \cup \bigcup_{k} \partial U'_k \subset \Gamma_g(M) \cup \bigcup_{k}\partial U'_k.$$
	
	Let $ \psi $ be the identity on $ \Gamma_g $. It remains to define $ \psi $ on the $ \partial U'_k $. For such a $ U' $, recall that $ \overline{U'} $ is uniformized to $ \overline{\D} $ in B\"ottcher coordinates. Hence, we may express
	the set $\overline{U'} \cap [M]$ as a union of radial segments $\gamma_1, \ldots, \gamma_\ell$, listed in a cyclic order. The endpoints of these radial segments separate $\partial U'$ into arcs $a_1, \ldots, a_n$, listed in the cyclic order where $a_i$ lies between the endpoints of $\gamma_i$ and $\gamma_{i+1 \, (\text{mod} \, n)}$. Divide each arc $a_i$ into two arcs $a^0_i$ and $a^1_i$ of equal length. Define $ \psi $ to map $a^j_i$ to $\gamma_{i+j}$ by an affine map. 
	
	So, we have $ J_g\xrightarrow{\phi_{\langle M\rangle }} \langle M\rangle \xrightarrow{\psi} \Gamma_g $. Let $ \varphi^g=\psi\circ \phi_{\langle M\rangle}: J_g\to \Gamma_g $, which is continuous.
	
	\begin{figure}
		\centering
		\includegraphics[width=0.66\textwidth]{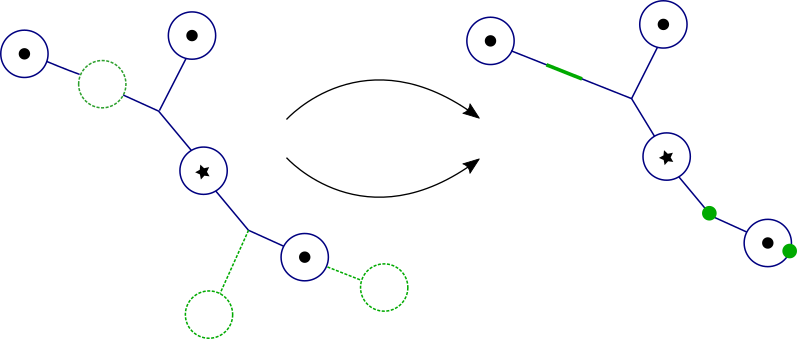}
		\caption{A virtual endomorphism for $ (g, \varphi, g^{-1}(\Gamma_g) , \Gamma_g) $ with $ g $ the kokopelli and $M = P_g$. Think of $ \varphi $ collapsing the unmarked bubbles.}
		%\caroline[inline]{todo: label arrows $ g $ and $ \varphi_1 $. Also could you please make the collapsed top left green line a dot in a proportionate kokopelli?}
		\label{fig:kokopelliPullbackCollapse}
	\end{figure}
	
	\subsection{The Elastic Hubbard Spine}
	The pullback $ g^{-n}(\Gamma_g) $ is a finite graph with vertices given by $ g^{-n}(V_M) $. 
	For $ n\geq 1 $, as $ g^{-n}(\Gamma_g)\subset J_g $, we define
	$\varphi^g_n := \varphi^g|_{g^{-n}(\Gamma_g)}$. 
	
	Let $\omega $ be an arbitrary elastic structure on $\Gamma_g$. 
	If $ e_0\subset \Gamma_g $ is an edge, then $ e_0 $ can be decomposed into the union of regulated edges $ e_1,...,e_\ell $ in $ g^{-n}(\Gamma_g) $ and radial segments $ r_1,...,r_k\in \overline{g^{-1}(F_M)\setminus F_M} $.
	For $1 \leq j \leq k$, denote by $U_j $ the Fatou component whose closure contains $r_j$. The points in $\partial U_j \cap [M]$ separate $\partial U_j$ into arcs. Let $\ell_j$ be the length of the shortest of these arcs.
	
	For $ G=(\Gamma_g, \omega) $, let the elastic collapse graph $(\varphi^g_n)_*(g^n)^*(G) $ be $\Gamma_g$ with elastic structure:
	$$
	(\varphi^g_n)_*(g^n)^*\omega(e_0) := \sum_{i=1}^\ell (g^n)^*\omega(e_i) +\sum_{j=1}^k \ell_j /4.
	$$
	
	\begin{thm}\label{tree collapse emb}
		%Let $ G= (\Gamma_g, \omega)$ be an elastic graph. 
		For $n\geq 1$, the map
		$ \varphi^g_n: (g^n)^*G\to (\varphi^g_n)_*(g^n)^*G $ has $ \Emb[\varphi^g_n]\leq 1$.
		%$ \varphi_n: (g^n)^*\left(\Gamma_g, (g^n)^*\omega_0^g\right) \to (\varphi_n)_*\left(\Gamma_g, (\varphi_n)_*(g^n)^*\omega_0^g\right)$ has $\Emb(\varphi_n) \leq 1$.
	\end{thm}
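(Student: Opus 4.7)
The strategy is to bound $\Emb(\varphi^g_n)$ directly for a carefully chosen representative in the homotopy class of $\varphi^g_n$; since $\Emb[\varphi^g_n]$ is the infimum over the class, this will yield $\Emb[\varphi^g_n] \le 1$. Within the homotopy class, I first replace $\varphi^g_n$ by a piecewise linear map that is affine in the elastic parameterization on each edge of $g^{-n}(\Gamma_g)$, so that on each edge the derivative is constant and equal to the ratio of target to source elastic lengths.

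Fix an edge $e_0 \subset \Gamma_g$ with the decomposition $e_0 = (\bigcup_i e_i) \cup (\bigcup_j r_j)$ from the definition of the pushforward. For a generic interior point $y \in e_0$, I compute the embedding energy witness $\sum_{x \in (\varphi^g_n)^{-1}(y)} |(\varphi^g_n)'(x)|$ by splitting on whether $y$ lies in some $e_i$ or some $r_j$.

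If $y \in e_i^\circ$, then using $g^{-n}(\Gamma_g) \subset J_g$, the edge $e_i$ lies in $\partial F_M \subset \langle M \rangle$, so $\varphi^g = \psi \circ \phi_{\langle M \rangle}$ restricts to the identity on $e_i$. By the definition of the pushforward, the target length of $e_i$ as a subarc of $e_0$ matches its source length $(g^n)^*\omega(e_i)$, so the derivative at $y$ is $1$; candidate stray preimages coming from $\psi$ folding some $\partial U'$ can only land in radial segments, and so do not contribute when $e_i$ is a boundary arc. If instead $y \in r_j^\circ$, then $r_j$ is a radial segment in a Fatou component $U_j$ with $g^n(U_j) \subset F_M$, so $\partial U_j \subset g^{-n}(\partial F_M) \subset g^{-n}(\Gamma_g)$, and the preimages of $y$ under $\varphi^g_n$ are precisely the two subarcs $a^0_{i^*}$ and $a^1_{i^*-1}$ of $\partial U_j$ that $\psi$ folds onto $\gamma_{i^*} = r_j$. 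Interpreting $\ell_j$ as the shortest elastic length of an arc $a_i$ and bisecting each $a_i$ in elastic parameterization so that $\omega(a^0_i), \omega(a^1_i) \ge \ell_j/2$, each derivative is bounded by $(\ell_j/4)/(\ell_j/2) = 1/2$, and the sum over the two preimages is at most $1$. Taking essential supremum over $y$ then gives $\Emb(\varphi^g_n) \le 1$, hence $\Emb[\varphi^g_n] \le 1$.

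The main obstacle is Case B: one must verify that the preimages of $y$ in $g^{-n}(\Gamma_g)$ are \emph{exactly} the two folding arcs, ruling out stray contributions from other edges via $\phi_{\langle M \rangle}$ collapsing components of $J_g \setminus \langle M\rangle$ to periodic branch points in $\langle M \rangle$. One must also confirm that the factor $1/4$ in the definition of $(\varphi^g_n)_*(g^n)^*\omega(e_0)$ is precisely tuned to the two factors of $1/2$ arising from (i) bisecting each arc $a_i$ in elastic parameterization and (ii) the two-fold multiplicity of preimages of each point on $r_j$.
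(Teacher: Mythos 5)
Your proposal follows essentially the same case analysis as the paper's proof: for generic points on the unchanged regulated pieces $e_i$ the map is an isometry (one preimage, derivative $1$), while for generic points on a radial segment $r_j$ the two folding-arc preimages each contribute at most $1/2$ because the target length $\ell_j/4$ is measured against a source half-arc of length at least $\ell_j/2$. The ``main obstacle'' you flag is resolved in the paper exactly as you anticipate: restricting, via the essential supremum, to target points whose $\psi$-preimages $w_\pm$ are not pre-periodic forces $\phi_{\langle M\rangle}^{-1}(w_\pm)=\{w_\pm\}$, so $\varphi^{-1}(z)=\{w_+,w_-\}$ with no stray contributions.
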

	
	\begin{proof}
		For brevity, we prove the result for $n=1$ (the general case is very similar; moreover, further preimages only add length). Let $ \varphi^g_1=\varphi $.
		
		We want to show $ \sup_{y\in \Gamma_g} \sum_{x\in \varphi^{-1}(y)} |\varphi'(x)| \leq 1$. Note where $ \varphi $ is isometric, $ \varphi' =1$.
		
		Let $z \in g^{-1}(\Gamma_g) \cap J_g\subset \langle M\rangle $ be a point which is not pre-periodic. Then,
		$$
		\varphi^{-1}(z) = \phi_{\langle M \rangle}^{-1}(z) = \{z\}.
		$$
		The edge $ e\subset g^*G $ that contains $ z $ then is also in $ \varphi_*g^*G $,
		%Let $e$ be the edge of $g^*G$ that contains $z$. Then $e$ is an edge of $\varphi_*g^*G$,
		so $\varphi|_e \equiv \Id_e$ is an isometry.
		
		Next, let $z \in g^{-1}(\Gamma_g) \cap \mathring{K}_g$ be a point contained in some Fatou component $U$. Then $z$ is contained in some radial segment $r$ of $U$. Note that $U$ cannot be a component of $F_M$ since $F_M \cap \varphi_*g^*G = \varnothing$.
		
		If $U$ is not a component of $g^{-1}(F_M)$, then $r \subset \varphi_*g^*G$, and $\varphi|_r \equiv \Id_r$ is an isometry.

		If $U \subset g^{-1}(F_M) \setminus F_M$, we may assume that for
		$$
		\{w_+, w_-\} = \psi^{-1}(z) \subset \partial g^{-1}(F_M),
		$$
		neither $w_+$ nor $w_-$ is pre-periodic. Then, we have
		$$
		\varphi^{-1}(z) = \phi_{\langle M \rangle}^{-1}(\{w_+, w_-\}) = \{w_+, w_-\}.
		$$
		Let $a_\pm$ be the arc in $\partial U \subset g^*G$ which contains $w_\pm$. Then $\psi|_{a_\pm}$ maps half of $a_\pm$ affinely onto $r \ni z$. Recall that $\varphi_*g^*\omega(r) \leq g^*\omega(a_\pm)/4$. Hence
		$$
		|\varphi'(w_+)| + |\varphi'(w_-)| \leq 1/2 + 1/2 = 1.
		$$
	\end{proof}
	
	Lastly, we prove there is $ N_1 $ so that for all $ n>N_1 $, $ e\in \Gamma_g $, $ (\varphi^g_{n})_*(g^{n})^*(\omega_0^g(e))>2. $
	
	That we can choose such an $ N_1 $ is related to why $ g $ is expanding on $ \Gamma_g $. Within the context of a finite tree such as $ [M] $, \cite{Po}(Thm 2.17) introduces the \emph{tree distance} of two vertices $ v_0, v_1 $ of $ [M] $: $ \dist_{[M]}(v_0, v_1)  $ is the number of edges in $ [v_0, v_1] $.
	
	\begin{thm}\label{tree loosens}
		Let $G = (\Gamma_g, \omega)$ be an elastic graph. For every edge $e$ of $G$, we have
		$$
		(\varphi^g_n)_*(g^n)^*\omega(e) \to \infty
		\hspace{5mm} \text{as} \hspace{5mm}
		n \to \infty.
		$$
	\end{thm}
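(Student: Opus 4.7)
The goal is to lower-bound $(\varphi^g_n)_*(g^n)^*\omega(e_0)$ by a quantity diverging with $n$. Set $w_{\min}:=\min_{e\subset\Gamma_g}\omega(e)>0$. Since the radial contributions $\ell_j/4$ in the definition of the pushforward weight are non-negative, we have
\[
(\varphi^g_n)_*(g^n)^*\omega(e_0)\;\geq\;\sum_{i=1}^{\ell_n(e_0)}\omega\bigl(g^n(e_i)\bigr)\;\geq\;\ell_n(e_0)\cdot w_{\min},
\]
where $\ell_n(e_0)$ denotes the number of regulated preimage edges $e_i\subset g^{-n}(\Gamma_g)$ appearing in the decomposition attached to $e_0$. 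Thus the theorem reduces to showing $\ell_n(e_0)\to\infty$ as $n\to\infty$.

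To establish this divergence, the plan is to combine two standard facts about the hyperbolic PCF quadratic $g$: (a) the grand orbit $\bigcup_{n\geq 0}g^{-n}(M)$ is dense in $J_g$, and (b) the tree distance framework of \cite{Po}(Thm 2.17) is monotone under pullback. For an edge $e_0\subset [M]\cap J_g$, these combine to show that the number of new vertices of $g^{-n}(V_M)$ landing in the interior of $e_0$ tends to infinity; each such vertex subdivides $e_0$ into one additional regulated sub-edge of $g^{-n}(\Gamma_g)$, yielding $\ell_n(e_0)\to\infty$. For an edge $e_0\subset\partial F_M$ on a Fatou boundary, a parallel argument applies via the density of preperiodic Fatou bubbles: new components of $g^{-n}(F_M)\setminus F_M$ accumulate against $e_0$, and each such bubble either refines the decomposition with a regulated boundary arc or contributes (via the affine map $\psi$) a radial segment, again forcing $\ell_n(e_0)\to\infty$.

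The main technical obstacle is verifying the correspondence between the decomposition implicit in the definition of $(\varphi^g_n)_*(g^n)^*\omega(e_0)$ and the combinatorial count of new preperiodic vertices and bubbles along $e_0$. This requires carefully tracking, via the explicit constructions of $\phi_{\langle M\rangle}$ and $\psi$ from \secref{sec:HubbardElasticStructures}, how each new vertex of $g^{-n}(M)$ in the interior of $e_0$ yields an edge of $g^{-n}(\Gamma_g)$ collapsed onto $e_0$ by $\varphi^g_n$, and how each new bubble contributes the appropriate boundary arc or radial segment. Once this bookkeeping is in place, the quantitative divergence $\ell_n(e_0)\to\infty$ follows from density of the grand orbit of $M$ and of the corresponding Fatou bubbles in $J_g$, completing the proof.
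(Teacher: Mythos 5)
Your approach differs substantially from the paper's, and it has real gaps.

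\textbf{What you do vs.\ what the paper does.} You lower-bound the pushforward weight by discarding the radial-segment contributions, reducing to the combinatorial count $\ell_n(e_0)$ of regulated preimage edges in the decomposition, and then invoke density of the grand orbit to argue that this count diverges. The paper instead splits edges of $\Gamma_g$ into two types and runs an interleaved recursion: for an edge $e\subset\partial U$ with $U$ a $p$-periodic Fatou component, the degree-two cover $g^p:\partial U\to\partial U$ forces the pullback weight to grow like $2^t\omega(\partial U)$; for a regulated tree edge $e=[v_0,v_1]$, Poirier's expansion constant $N$ guarantees either $\dist_{[M]}(g^N(v_0),g^N(v_1))\geq 2$ (so the depth-$(n+N)$ weight is at least $2a_n$) or $g^N(e)$ meets $F_M$ (so it is at least $b_n/4$), and combining the two gives $a_n\to\infty$ because $b_n\to\infty$ exponentially. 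The Fatou-boundary growth is what drives everything; your proposal does not use it.

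\textbf{Gaps.} First, the density claim you state is for the grand orbit of $M$; but for hyperbolic $g$ one has $P_g\subset F_M$, so $\bigcup_n g^{-n}(M)$ is disjoint from $J_g$ and is certainly not dense there. The relevant set is $\bigcup_n g^{-n}(V_M)$ with $V_M\subset J_g$, which you do write later, but the slip matters since the whole argument hinges on it. Second, and more seriously, the step ``each new vertex subdivides $e_0$ into one additional regulated sub-edge, so $\ell_n(e_0)\to\infty$'' fails for regulated edges $e_0\subset[M]$ that pass through Fatou components not in $F_M$: once such a component $U$ appears in $g^{-n}(F_M)$, the corresponding portion of $e_0$ becomes a radial segment $r_j$ contributing only $\ell_j/4$, not a regulated edge contributing $\geq w_{\min}$. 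To make the radial contributions grow you would be forced back to the paper's Fatou-boundary doubling argument, so the count-based lower bound does not close the argument on its own. Third, the appeal to \cite{Po}(Thm 2.17) as ``monotone under pullback'' misreads what that theorem supplies: it gives a uniform iterate $N$ such that adjacent vertices of $[M]$ are pushed to tree-distance $\geq 2$, which is an \emph{expansion} statement used forward, not a monotonicity-under-pullback statement. Your sketch is plausible in spirit, but the bookkeeping you flag as the ``main technical obstacle'' is precisely where the argument as written breaks, and it is not clear it can be repaired without rediscovering the paper's two-case recursion.
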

	
	\begin{proof}
		Note that an edge in $\Gamma_g$ is either a regulated arc in $[M]$, or a boundary arc of a Fatou component in $F_M$. Denote by $a_n>0$ and $b_n>0$ the length of the shortest edge of the former and the latter kind respectively for the graph $(\varphi_n^g)_*(g^n)^*G$.
		
		Suppose $e$ is a boundary arc of some $p$-periodic Fatou component $U \subset F_M$. Then for some $k \in \N$, we have $g^{kp}(e) \supset \partial U$. Since $g^p$ is at least a two-fold cover of $\partial U$ onto itself, the number of preimages of an edge is bounded below by respective powers of 2. So, the following inequality means $ b_n\to \infty $ as $ n\to\infty $:
		$$
		(\varphi_{(t+k)p}^g)_*(g^{(t+k)p})^*\omega(e) \geq 2^t\omega(\partial U)
		\hspace{5mm} \text{for} \hspace{5mm}
		t \in \N.
		$$
		
		Suppose instead that $e = [v_0, v_1]$ is a regulated arc in $[M]$. Choose a uniform constant $N \in \N$ as given in \cite{Po}(Thm 2.17). If $g^N(e) \cap F_M = \varnothing$, then
		$$
		(\varphi_{n+N}^g)_*(g^{n+N})^*\omega(e) \geq a_n\dist_{[M]}(g^N(v_0), g^N(v_1)) \geq 2a_n.
		$$
		Otherwise, $g^N$ maps a part of $e$ to a radial segment in $F_M$. This implies
		$$
		(\varphi^g_{n+N})_*(g^{n+N})^*\omega(e) \geq b_n/4.
		$$
		In either case, we have $a_n \to \infty$ as $n \to \infty$.
	\end{proof}

	\section{Elastic Structures for $ \Gamma_f $}
	\label{sec:RayElasticStructures}
	\subsection{Collapsing onto the ray portrait spine}\label{sec:collapsing}
	Let $f$ be a quadratic polynomial in the main molecule with spine $\Gamma_f$, and let $\langle f_0,f_1,\dots, f_m=f\rangle$ be the tuning sequence for $f$. 
	This section will describe the structure of $\Gamma_f$ in terms of {\it sectors}. 
	%Particular attention will be paid to how $\Gamma_{f_i}$ differs from $\Gamma_{f_{i+1}}$ and how $\Gamma_f$ differs from $f^{-q}(\Gamma_f)$. 
	In particular, for any preimage of an external ray, we show that the collapse map $\varphi$ either fixes it or maps it to an external ray of lower generation.

	Recall the {\it ray portrait spine of generation $i \leq m$ and depth $q \geq 0$} is defined as
	$$
	\Gamma_i^q = \bS^1_\infty \cup \bigcup_{j=1}^i f^{-q}(\rp_j) \hspace{4mm}\text{ and } \hspace{4mm} \Gamma_0^q=\bS^1_\infty
	$$
	
	The closure of a component of $\widetilde\bbC \setminus \Gamma_i^q$ is called a {\it sector of generation $i$ and depth $q$}. 
	%In the language of sectors, we get the following as a corollary of Proposition \ref{rays form spine}.
	
	A sector is said to be {\it marked} if it contains a point in the post-critical set $P_f$. Otherwise, it is said to be {\it unmarked}. Let $\cB_i^q$ be the union of all marked $ (i,q) $-sectors.
	
	\begin{lem}\label{lem:nest}
		For $j \leq i$ and $p \leq q$, we have $\cB_i^q \subset \cB_j^p$.
	\end{lem}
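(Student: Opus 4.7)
The plan is to reduce the sector containment to an inclusion of the ambient spines $\Gamma_j^p \subset \Gamma_i^q$; once this is in place, each $(i,q)$-sector is contained in a unique $(j,p)$-sector, and any post-critical point witnessing that the inner sector is marked automatically witnesses that the outer one is marked.

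First I would verify the forward invariance $f(\rp_j)\subset\rp_j$ for each $1\leq j\leq m$. By construction $\rp_j$ consists of the dynamical rays $\cR_t$ of $f$ whose angles lie in the orbit portrait $\cP_j=\{A_1,\dots,A_{p_j}\}$, and the defining property of an orbit portrait is that angle doubling permutes the $A_k$ cyclically, so $\bigcup_k A_k$ is closed under $t\mapsto 2t$. Since $f$ acts on external rays via $\cR_t\mapsto\cR_{2t}$, this forces $\rp_j\subset f^{-1}(\rp_j)$, and iterating gives $f^{-p}(\rp_j)\subset f^{-q}(\rp_j)$ for all $0\leq p\leq q$. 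Combining this with the obvious union inclusion coming from $j\leq i$ yields
\[
\Gamma_j^p \;=\; \bS^1_\infty \cup \bigcup_{k=1}^{j} f^{-p}(\rp_k) \;\subset\; \bS^1_\infty \cup \bigcup_{k=1}^{i} f^{-q}(\rp_k) \;=\; \Gamma_i^q.
\]

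The final step is purely topological. Since $\Gamma_j^p\subset\Gamma_i^q$, every connected component of $\widetilde{\bbC}\setminus\Gamma_i^q$ lies in a unique connected component of $\widetilde{\bbC}\setminus\Gamma_j^p$; passing to closures, each $(i,q)$-sector $S$ sits inside a unique $(j,p)$-sector $T$. If $S\subset\cB_i^q$, then $S$ contains some $z\in P_f$, and since $z\in S\subset T$ the sector $T$ is marked as well, i.e.\ $T\subset\cB_j^p$. Hence $S\subset\cB_j^p$, which gives $\cB_i^q\subset\cB_j^p$. I do not anticipate any real obstacle here: the content is just the forward-invariance of orbit portraits under angle doubling, together with the routine observation that enlarging a spine refines its sector decomposition.
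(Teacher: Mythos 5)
Your proof is correct and follows essentially the same route as the paper: reduce to the inclusion $\Gamma_j^p\subset\Gamma_i^q$, conclude that the $(i,q)$-sector decomposition refines the $(j,p)$-sector decomposition, and observe that a marking point propagates outward. The only difference is that you supply the (true, and worth noting) forward-invariance argument for $\rp_j$ under angle doubling, which the paper absorbs into the phrase ``by definition.''
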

	\begin{proof}
		By definition, $\Gamma_j^p\subset\Gamma_i^q$, therefore $\widetilde{\bbC}\setminus\Gamma_i^q\subset\widetilde{\bbC}\setminus\Gamma_j^p$ are nested.  In particular, for each $ (i,q) $-sector $\Sigma_i^q$, there is a  $ (j,p) $-sector $\Sigma_j^p$ that contains $\Sigma_i^q$.  If $\Sigma_i^q$ contains a marked point $x$, then $\Sigma_j^p$ will also contain $x$.  Thus $\cB_i^q \subset \cB_j^p$.
	\end{proof}
	We now show the sectors adjacent to $\rp_i$ are marked (see Figure 4).
	\begin{lem}\label{band}
		For $q\geq 0$ and $1\leq i\leq m$, each connected component of $\cB_i^q$ contains a connected component of $\rp_i$. Moreover, the set $\cB_i^q$ contains a neighborhood of $\rp_i$.
	\end{lem}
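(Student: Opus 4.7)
The plan is to prove both assertions directly, using the tuning (renormalization) structure of $f$ in the main molecule together with the basic topological fact that external rays for $f$ lie in $\bbC \setminus K_f$.

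The key geometric input is a tuning fact about each $p \in \cO_i$: the $k$ rays of $\rp_i$ meeting at $p$ (where $k$ is the rotation number from Lemma \ref{lem: sector_portrait}(a)) divide a neighborhood of $p$ into $k$ wedges, and in each wedge $w$ lies a little filled Julia set $\widetilde{K}_p^w$ of $f$ with $p \in \partial \widetilde{K}_p^w$ and $\widetilde{K}_p^w \cap P_f \neq \emptyset$. Morally, the $k$ parabolic petals at $p$ for the root map $f_{\widehat{c}_i}$ deform to attracting Fatou components of $f_i$ abutting $p$, and the tunings carrying $f_i$ to $f = f_m$ turn each such Fatou component into a little filled Julia set carrying post-critical points of $f$.

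Granted this tuning fact, the neighborhood claim is immediate. Since external rays lie in $\bbC \setminus K_f$ while $\widetilde{K}_p^w \subset K_f$, no edge of $\Gamma_i^q$ can enter $\widetilde{K}_p^w$, so the $(i, q)$-sub-sector at $p$ in wedge $w$ contains all of $\widetilde{K}_p^w$ and is marked. For a point $z$ interior to a ray $R$ of $\rp_i$ from $p$ to $\infty \cdot e^{2\pi i \theta}$, the two sub-sectors adjacent to $z$ are precisely the $(i,q)$-sub-sectors at $p$ in the two wedges bounded by $R$, since no other ray of $\Gamma_i^q$ can cross $R$; hence both are marked. The vertex at $\infty \cdot e^{2\pi i \theta}$ on $\bS^1_\infty$ is handled similarly. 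The case $q = 0$ reduces to $\cB_i^0 = \widetilde{\bbC}$, obtained by applying Proposition \ref{orbit-spine} to $f_i$ and transferring via the tuning correspondence.

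For the components claim, any marked sector $S$ contains some $p_* \in P_f$ that sits at the bottom of a nested tower of little filled Julia sets whose outermost (generation-$i$) member $\widetilde{K}$ abuts some $p \in \cO_i$; the no-crossing argument gives $\widetilde{K} \subset S$ and hence $p \in \overline{S}$, so $S$ lies in the same component of $\cB_i^q$ as the star of marked sub-sectors around $p$, which by the neighborhood claim contains the entire component of $\rp_i$ landing at $p$. The hard part will be isolating and justifying the tuning fact above, since it relies on the renormalization structure of the main molecule rather than anything purely combinatorial about ray portraits; once this geometric ingredient is secured, the rest of the proof reduces to the no-crossing property of external rays.
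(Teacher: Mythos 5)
Your argument is essentially correct but takes a noticeably different route than the paper, staying at the level of $f$ itself rather than reducing to $f_i$. The paper's proof first observes that $\rp^{f_i}_i$ and $\rp^f_i$ share an angle portrait and declares it suffices to prove the lemma for $f_i$, where the geometric witnesses for markedness are simply the attracting Fatou components $H^j_l$ of $f_i$ satisfying $\bigcap_{l}\overline{H^j_l}=\{x_j\}$; the rest is your no-crossing argument in disguise (a sector is marked iff it contains some $H^j_l$, the sector containing $H^j_l$ touches $x_j$ and hence has the two bounding rays of $\rp_i$ on its boundary, and the $k$ such sectors form the star $U_j$). Your ``tuning fact''---$k$ level-$i$ satellite renormalization pieces abutting each $p\in\cO_i$, one per wedge, carrying all of $P_f$---is exactly the $f$-side incarnation of this configuration, and you then run the same no-crossing argument directly for $f$. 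What your route buys is that you never have to justify why a depth-$q$ sector of $f$ is marked iff the corresponding sector of $f_i$ is (a step the paper leaves tacit); what it costs is that the load-bearing input becomes a genuine statement about the renormalization structure of main-molecule maps rather than an elementary fact about the Fatou components of a single hyperbolic PCF quadratic. You correctly flag that tuning fact as the hard part and do not prove it; that is the one real gap, though it is an acknowledged one, and the paper's reduction to $f_i$ leans on a comparably unargued renormalization correspondence. One small simplification: for the $q=0$ base case you do not need to route through $f_i$ at all---Proposition~\ref{orbit-spine} already shows every generation-$m$ sector of $f$ is marked, and generation-$i$ sectors contain generation-$m$ sectors, so $\cB_i^0=\widetilde{\bbC}$ directly.
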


	\begin{proof}
Consider the $i$th map $f_i$ in the tuning sequence for $f$. The angle portrait of $\rp^{f_i}_i$ is equal to that of $\rp^f_i$ by definition. Hence, it suffices to prove the result for $f=f_i$.

For some $k, p \geq 1$, the attracting orbit for $f$ has period $kp$, and there exists a $p$-periodic repelling orbit $\cO_i = \{x_0, \ldots, x_{p-1}\}$ such that $x_j$ is the landing point of $k$ external rays in $\rp_i$. Clearly, $\rp_i$ has $p$ connected components: one for each point in $\cO_i$.

For $0\leq j <p$, $x_j$ is the common intersection point of the closures of $k$ attracting periodic components of $f$. For $0 \leq l < k$, let $z^j_l$ be the attracting $kp$-periodic orbit point contained in an attracting periodic component $H^j_l$ such that
$$
\bigcap_{l=0}^{k-1} \overline{H^j_l} = \{x_j\}.
$$
A sector of generation $i$ and depth $q$ is marked if and only if it contains $H^j_l$ for some $j$ and $l$. Let $S_i^q(j, l)$ be the marked sector of generation $i$ and depth $q$ containing $H^j_l$. Then $\partial S_i^q(j,l)$ contains exactly two rays in $\rp_i$ that land at $x_j$. It is easy to see that the union
$$
U_j := \bigcup_{l=0}^{k-1} \overline{S_i^q(j,l)}
$$
is a connected component of the set of marked sectors $\mathcal{B}_i^q$. Moreover, the interior of $U_j$ is a neighborhood of the set of external rays that land at $x_j$.
\end{proof}
	
	Let $\mathcal{U}_i^q$ be the union of unmarked sectors of generation $i$ and depth $q$. The closure of a component of $\mathcal{U}_i^q \setminus \Gamma_i^0$ is called an {\it unmarked disk of generation $i$ and depth $q$}.  By the Jordan-Schonflies theorem, each component of $\widetilde\bbC\setminus\Gamma_i^q$ is an open disk, so the unmarked disks of generation $i$ and depth $q$ are indeed closed topological disks.
	
	\begin{lem}\label{unmarked ray}
		Let $\cR \subset \Gamma_i^q \setminus \Gamma_i^0$ be an external ray of $f$. Then there exists a unique unmarked disk $D_i^q$ of generation $i$ and depth $q$ such that $\cR \subset D_i^q$.
	\end{lem}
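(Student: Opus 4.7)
First I would observe that $\cR \in f^{-q}(\rp_j)$ for some $j\leq i$, with $\cR \notin \rp_{j'}$ for any $j'\leq i$; hence its landing point $w$ satisfies $f^q(w)\in\cO_j$ and, because the periodic cycles $\cO_{j'}$ associated to distinct generations of the tuning sequence are pairwise disjoint, $w\notin \bigcup_{j'\leq m}\cO_{j'}$. Since external rays with distinct landing points are disjoint and $\cR$ does not share a landing point with any ray in $\Gamma_m^0\setminus\bS^1_\infty$, the set $\cR\setminus\bS^1_\infty$ lies inside a single connected component of $\widetilde{\bbC}\setminus\Gamma_m^0$. Thus $\cR$ is contained in a unique generation-$m$ depth-$0$ sector $\Sigma^{(m)}$, which by Proposition~\ref{orbit-spine} contains exactly one post-critical point $p^*$.

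Next, I would use that all $k_j$ rays of $f^{-q}(\rp_j)$ landing at $w$ lie inside $\Sigma^{(m)}$ (by the same disjointness argument), and together they divide $\Sigma^{(m)}$ into $k_j$ sub-regions, exactly one of which contains $p^*$. Consequently, at least one of the two sub-regions of $\Sigma^{(m)}$ immediately bordering $\cR$ contains no post-critical point. Promoting this to the coarser partition $\Gamma_i^q$ requires showing that the generation-$i$ depth-$q$ sector $S$ containing this sub-region remains unmarked globally; here $S$ is contained in the generation-$i$ depth-$0$ sector $\Sigma$ holding $\cR$ (because $\partial\Sigma\subset\Gamma_i^0\subset\Gamma_i^q$), but in principle $S$ could extend past $\partial\Sigma^{(m)}$ across the rays of $\rp_{j'}$ with $i<j'\leq m$, which sit in $\Gamma_m^0$ but not in $\Gamma_i^q$. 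I expect this to be the main obstacle, and I would resolve it by invoking Lemma~\ref{band} (the marked components form a neighborhood of $\rp_i$ and each such component contains a component of $\rp_i$) together with Lemma~\ref{lem:nest} ($\cB_i^q\subset \cB_j^0$ for $j\leq i$ and all $q$), which together constrain how far the marked region $\cB_i^q$ can spread across the generation-$m$ subdivision of $\Sigma$ and ensure that the marked sub-sectors in $\Sigma$ are precisely those meeting the post-critical point carried by each generation-$m$ sub-sector, not the sub-sector $U$ that bounds $\cR$.

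Finally, uniqueness of the unmarked disk follows formally: any unmarked sector bordering $\cR$ shares the connected arc $\cR\setminus\bS^1_\infty$ with any other such sector, placing them in the same component of $\mathcal{U}_i^q\setminus\Gamma_i^0$, hence in the same unmarked disk $D_i^q$. If only one side of $\cR$ is unmarked, that side singles out the unique disk containing $\cR$.
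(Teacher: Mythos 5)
Your route differs substantially from the paper's. The paper argues directly via Lemma~\ref{band}: each component of $\cB_i^q$ is a union of $(i,q)$-sectors arranged around a single point $x_j\in\cO_i$, sharing boundary only along the rays of $\rp_i$ landing at $x_j$, so the interior of $\cB_i^q$ meets $\Gamma_i^q$ only in $\rp_i$. Since $\cR\not\subset\rp_i$, the ray $\cR$ cannot be interior to $\cB_i^q$; hence $\cR\subset\partial\cB_i^q\subset\cU_i^q$, which places it in an unmarked disk. You instead embed $\cR$ in a generation-$m$ depth-$0$ sector $\Sigma^{(m)}$, invoke Proposition~\ref{orbit-spine} for its unique post-critical point $p^*$, and argue via the subdivision of $\Sigma^{(m)}$ by the depth-$q$ preimage rays landing at $w$.

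That route has a genuine gap, which you flag but do not close, and it is fatal for $i<m$. The $(i,q)$-sector $S$ adjacent to $\cR$ on the $p^*$-free side is \emph{not} contained in $\Sigma^{(m)}$: any ray of $\rp_{j'}$ with $j'>i$ lying on $\partial\Sigma^{(m)}$ belongs to $\Gamma_m^0$ but not to $\Gamma_i^q$, so $S$ crosses it and spills into an adjacent generation-$m$ sector $\Sigma'^{(m)}$, which carries its own post-critical point; nothing in your argument prevents $S$ from absorbing it. Your proposed repair via Lemmas~\ref{band} and~\ref{lem:nest} is not carried out and does not obviously work: Lemma~\ref{lem:nest} only nests the $\cB_i^q$ as $(i,q)$ decreases, and Lemma~\ref{band} only locates $\cB_i^q$ near $\rp_i$; neither controls where the post-critical point of $\Sigma'^{(m)}$ sits relative to $S$. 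Closing the gap would essentially require redeveloping the petal structure of $\cB_i^q$ that Lemma~\ref{band} already supplies, at which point the paper's direct argument is the shorter path. Your uniqueness paragraph is correct and matches the paper's.
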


	\begin{proof}
	%Since $\cR \subset \Gamma_i^q \setminus \Gamma_i^0$, we have that $\cR$ is in the boundary of (at least) one sector $\Sigma_i^q$.
		There is some sector $\Sigma_i^q$ such that $\cR\subset \Sigma_i^q$. If $\Sigma_i^q$ is unmarked, $\Sigma_i^q$ is the desired $D_i^q$.  So we assume that $\Sigma_i^q$ is marked.  %Since $\cR\not\subset\rp_j$ for any $1\leq j\leq i$, 
		Then \lemref{band} shows that $\cR\subset \Sigma^q_i$ implies that 
	$\cR\subset \partial \Sigma_i^q$. Since the boundary of $\cB_i^q$ is contained in $\bS^1_\infty \cup \cU_i^q$, we must have $\cR\subset \cU_i^q$.
		
		Uniqueness follows from the fact that two unmarked disks which each contain $\cR$ must be contained within the same connected component of $\cU_i^q$.
	\end{proof}

	We can describe the boundary of an unmarked disk in terms of two paths.
	\begin{lem}\label{disk}
		Let $D_i^q$ be an unmarked disk of generation $i$ and depth $q>0$.  The boundary $\partial D_i^q$ consists of the union of one path in $\Gamma_{i-1}^0$ and one path in $\Gamma_i^q \setminus \Gamma_i^0$, denoted by $\varphi(D_i^q)$ and $\widehat{\varphi}(D^q_i)$, respectively.
	\end{lem}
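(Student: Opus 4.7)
My plan is to decompose $\partial D_i^q$ into the two arcs required by the statement. Since every complementary face of $\Gamma_i^q$ in $\widetilde{\bbC}$ is a topological disk, each unmarked $(i,q)$-sector is a closed topological disk, and the unmarked disk $D_i^q$ is by construction a maximal union of such sectors glued along their shared edges in $\Gamma_i^q \setminus \Gamma_i^0$ (two unmarked sectors joined only along $\Gamma_i^0$-edges get separated upon removing $\Gamma_i^0$). This union is simply connected, so $\partial D_i^q$ is a Jordan curve, which I would decompose as
\[
\partial D_i^q = A \cup B, \qquad A := \partial D_i^q \cap \Gamma_i^0, \qquad B := \partial D_i^q \setminus \Gamma_i^0 \subset \Gamma_i^q \setminus \Gamma_i^0.
\]

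Next I would establish the containment $A \subseteq \Gamma_{i-1}^0$. By \lemref{band}, $\rp_i$ lies in the interior of $\cB_i^q$, so $\rp_i \cap \partial D_i^q = \varnothing$, and since $\Gamma_i^0 = \Gamma_{i-1}^0 \cup \rp_i$ one concludes $A \subseteq \Gamma_{i-1}^0$. I would also note that because $\Gamma_{i-1}^0 \subset \Gamma_i^0$ separates the sphere and $D_i^q$ is connected, $D_i^q$ is contained in the closure of a single $(i-1,0)$-sector $\Sigma'$; consequently $A = \partial D_i^q \cap \partial \Sigma'$ while $B$ lies strictly inside $\Sigma'$.

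The remaining task, and the main substance of the proof, is to show that $A$ and $B$ are each connected. I would combine the pinwheel description of \lemref{lem: sector_portrait} with the neighborhood statement in \lemref{band} to argue that inside $\Sigma'$ the marked $(i,q)$-sub-sectors form a cyclically ordered ``ring'' of regions clustered around a common preperiodic point lying over $\cO_i$. Then the unmarked regions interleave the ring as single arcs, so each unmarked disk $D_i^q$ meets $\partial \Sigma'$ in exactly one arc (giving the connectedness of $A$) and is separated from the ring along exactly one chain of edges in $\Gamma_i^q \setminus \Gamma_i^0$ (giving the connectedness of $B$).

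The main obstacle will be extending the depth-$0$ pinwheel picture of \lemref{lem: sector_portrait} to arbitrary depth $q$: at deeper pullbacks the sectors are subdivided further by preimages of the ray portraits, and one must verify that these refinements preserve the cyclic adjacency of marked regions so that no unmarked disk acquires two separate arcs on $\partial \Sigma'$. The natural strategy is induction on $q$, using that $f$ sends $\Gamma_i^q$ surjectively onto $\Gamma_i^{q-1}$ and that the marked/unmarked classification is well-behaved under this map, so each refinement step can only add new ``pinwheels'' around preimages of the periodic cycle without disconnecting the existing ring structure.
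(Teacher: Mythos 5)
Your decomposition of $\partial D_i^q$ and the containment $A \subseteq \Gamma_{i-1}^0$ via \lemref{band} match the paper's opening step. The two arguments diverge on the connectedness of $A$ and $B$, and that is where your proposal has a genuine gap: you flag the crucial step yourself --- ``one must verify that these refinements preserve the cyclic adjacency of marked regions so that no unmarked disk acquires two separate arcs on $\partial \Sigma'$'' --- call it the ``main obstacle,'' and then only describe a ``natural strategy'' of induction on $q$ without carrying it out. That step \emph{is} the content of the lemma, so leaving it as a sketch leaves the proof incomplete. It is also not clear that the tools you invoke suffice as stated: \lemref{lem: sector_portrait} describes only the depth-$0$ characteristic sector, and \lemref{band} controls the marked region $\cB_i^q$ rather than the combinatorics of how the unmarked disks sit against $\partial\Sigma'$, so neither directly delivers the interleaving claim at arbitrary depth.

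The paper avoids the depth-$q$ combinatorics entirely with a short topological observation: every external-ray edge of $\Gamma_{i-1}^0$ has an endpoint on $\bS^1_\infty$, which is itself a subgraph of $\Gamma_{i-1}^0$, so the portion of the Jordan curve $\partial D_i^q$ lying in $\Gamma_{i-1}^0$ is a single arc; the remaining portion $B$ is then connected simply as the complement of a connected arc in a circle. No induction on $q$ and no pinwheel bookkeeping is needed. If you wish to pursue the structural route, you would need to actually prove the interleaving statement at each depth, which is considerably more work than the paper's one-line argument and is precisely the part you left open.
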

	
	We note that the use of the notation $\varphi(D_i^q)$ and $\widehat{\varphi}(D_i^q)$ here is suggestive -- we will later define a collapse map $\varphi_q$ on the pullback of the ray portrait spine under $f^q$, and the path $\varphi(D_i^q)$ will exactly be the image of the disk $D_i^q$ under this collapse.
	
	\begin{proof}
		A priori, the boundary $\partial D^q_i$ is contained in $\Gamma^q_i$. However, Lemma $\ref{band}$ shows the interior of $\cB_i^q$ contains a neighborhood of $\rp_i$, and hence is disjoint from $\cU_i^q$. Thus,
		\begin{align*}
		\partial D^q_i \subset \Gamma^q_i\setminus \rp_i
		=\Gamma^0_{i-1}\,\sqcup\, \Gamma^{q}_i\setminus \Gamma^0_i
		\end{align*}

		As $D_i^q\subset \cU^q_i$, its boundary $\partial D^q_i$ contains an edge in $\Gamma^q_i\setminus \Gamma^0_i$. Because the connected components of $\cU_i^q$ intersect $\cB^q_i$ nontrivially at their mutual boundary in $\Gamma^0_{i-1}$,
		$D_i^q$ has a boundary edge in $\Gamma^0_{i-1}$.
		In fact, the edges of each type are connected. Each edge which is an external ray intersects $\bS^1_\infty$, which is contained as a graph in $\Gamma^0_{i-1}$. So any two or more edges in $\partial D^q_i\cap \Gamma^0_{i-1}$ are path connected. 
		What's left in $\partial D^q_i\setminus \Gamma^0_{i-1}\subset \Gamma^q_i\setminus \Gamma^0_i$ must be connected as the complement of a connected set in $\bS^1_{\infty}$.
	\end{proof}
	
	\begin{lem}\label{unmarked nest}
		Let $D_i^q$ and $D_{i+1}^q$ be an unmarked disk of depth $q$, and generation $i$ and $i+1$ respectively. Then either
		\begin{enumerate}
			\item[i)] $\mathring D_i^q \cap \mathring D_{i+1}^q = \varnothing$; or
			\item[ii)] $D_i^q \subset D_{i+1}^q$, and $\varphi(D_i^q) \subset \varphi(D_{i+1}^q)$.
		\end{enumerate}
	\end{lem}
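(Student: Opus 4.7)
The plan is to reduce the dichotomy to the set-theoretic inclusion
\[ \mathcal{U}_i^q \setminus \Gamma_i^0 \subset \mathcal{U}_{i+1}^q \setminus \Gamma_{i+1}^0, \]
from which a connectedness argument will handle the rest. Two ingredients need verification. First, I will argue that $\mathcal{U}_i^q \subset \mathcal{U}_{i+1}^q$: passing from generation $i$ to $i+1$ only adds the walls $f^{-q}(\rp_{i+1})$, which can subdivide an unmarked $(i,q)$-sector only into $(i+1,q)$-sectors containing no post-critical point, hence again unmarked. Second, I will show $\rp_{i+1} \cap \mathcal{U}_i^q = \varnothing$: \lemref{band} applied at generation $i+1$ gives an open neighborhood of $\rp_{i+1}$ inside $\mathcal{B}_{i+1}^q$, so no unmarked $(i+1,q)$-sector meets $\rp_{i+1}$; hence $\rp_{i+1} \cap \mathcal{U}_{i+1}^q = \varnothing$, and \emph{a fortiori} $\rp_{i+1} \cap \mathcal{U}_i^q = \varnothing$. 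Since $\Gamma_{i+1}^0 \setminus \Gamma_i^0 = \rp_{i+1}$, these two facts combine to yield the displayed inclusion.

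Next, assuming $\mathring D_i^q \cap \mathring D_{i+1}^q \neq \varnothing$, I will derive $D_i^q \subset D_{i+1}^q$. Picking a point $p$ in the intersection, let $C_i^q$ and $C_{i+1}^q$ denote the connected components of $\mathcal{U}_i^q \setminus \Gamma_i^0$ and $\mathcal{U}_{i+1}^q \setminus \Gamma_{i+1}^0$ containing $p$, so $D_i^q = \overline{C_i^q}$ and $D_{i+1}^q = \overline{C_{i+1}^q}$. The inclusion above exhibits $C_i^q$ as a connected subset of $\mathcal{U}_{i+1}^q \setminus \Gamma_{i+1}^0$ meeting $C_{i+1}^q$, so $C_i^q \subset C_{i+1}^q$, and taking closures gives $D_i^q \subset D_{i+1}^q$.

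Finally, for the conclusion $\varphi(D_i^q) \subset \varphi(D_{i+1}^q)$, I plan to invoke \lemref{disk}: since $\widehat{\varphi}(D_{i+1}^q) \subset \Gamma_{i+1}^q \setminus \Gamma_{i+1}^0$ and $\varphi(D_{i+1}^q) \subset \Gamma_i^0 \subset \Gamma_{i+1}^0$, and $\mathring D_{i+1}^q$ is disjoint from $\Gamma_{i+1}^0$ by construction, I obtain $D_{i+1}^q \cap \Gamma_{i+1}^0 = \varphi(D_{i+1}^q)$. Now $\varphi(D_i^q) \subset \Gamma_{i-1}^0 \subset \Gamma_{i+1}^0$ and $\varphi(D_i^q) \subset \partial D_i^q \subset D_{i+1}^q$, so $\varphi(D_i^q) \subset D_{i+1}^q \cap \Gamma_{i+1}^0 = \varphi(D_{i+1}^q)$. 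The main obstacle I anticipate is making the first paragraph's subdivision argument fully rigorous, in particular reliably extracting from \lemref{band} that $\rp_{i+1}$ is incident only to marked $(i+1,q)$-sectors; once that is in hand, the rest reduces to bookkeeping about which walls do or do not get cut at each generation.
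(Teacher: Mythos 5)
Your proof is correct and follows the same overall strategy as the paper's: establish a nesting of the unmarked regions, deduce the disk dichotomy by connectedness, then invoke Lemma~\ref{disk} to compare $\varphi(D_i^q)$ with $\varphi(D_{i+1}^q)$. The one place you are more careful than the paper is the first step: the paper records only $\cU_i^q \subset \cU_{i+1}^q$ (which it gets directly from Lemma~\ref{lem:nest}, a slightly cleaner route than your sector-subdivision argument), but since the unmarked disks are components of $\cU_i^q \setminus \Gamma_i^0$ rather than of $\cU_i^q$, one really needs $\cU_i^q \setminus \Gamma_i^0 \subset \cU_{i+1}^q \setminus \Gamma_{i+1}^0$, and you correctly supply the missing observation via Lemma~\ref{band} that $\rp_{i+1}$ avoids $\cU_i^q$. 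Your final identification $D_{i+1}^q \cap \Gamma_{i+1}^0 = \varphi(D_{i+1}^q)$ is just a cleaner repackaging of the paper's ``unique maximal path'' phrasing and works for the same reason.
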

	\begin{proof}
		Since $\cB_{i+1}^q\subset \cB_i^q$, we have  $    \cU_i^q=\overline{\widetilde{\bbC}\setminus\cB_i^q}\subset \overline{\widetilde{\bbC}\setminus\cB_{i+1}^q}=\cU_{i+1}^q$.
		%\becca[inline]{this isn't quite true because both $\mathcal{U}_i^q$ and $\mathcal{B}_{i+1}^q$ are closed sets}
		Thus, connected components in $\cU_i^q$ are contained in the connected components of $\cU_{i+1}^q$ that they intersect nontrivially. Next, assume $D_i^q$, $D_{i+1}^q$ are unmarked disks such that $D_i^q\subset D_{i+1}^q$. Then, $\varphi(D_i^q)\subset D_{i+1}^q$.
		Moreover, because $\mathring D_{i+1}^q\cap \Gamma_{i-1}=\varnothing$, $\varphi(D_i^q)$ is contained in the boundary of $D_{i+1}^q$.
		By Lemma \ref{disk}, $\varphi(D_{i+1}^q)$ is the unique maximal path in $\partial D_{i+1}^q\cap \Gamma_i^0$. The path $\varphi(D_i^q)$ is also in $\Gamma_{i-1}^0\subset \Gamma_i^0$, hence $\varphi(D_i^q)\subset \varphi(D_{i+1}^q)$.
	\end{proof}
	\p{Collapse maps}
	Recall that each unmarked disk $ D_i^q $ of any generation $ i $ and depth $ q $ is a closed disk, and $ \varphi(D_i^q) $ is a closed segment on its boundary.
	%Recall that for each unmarked disk $D_i^q$ of any generation $i$ and depth $q$, there is a homeomorphism $h: D^q_i\to \overline{\D}$. 
	Within a generation $i$, index the unmarked disks by $k$.
	Then, for each $k$, there exists a deformation retract $F_i^k: D^q_i(k)\to \varphi(D^q_i(k)$ which also maps $\hat{\varphi}(D^q_i(k))$ to $\varphi(D^q_i(k))$ homeomorphically. Then, as $\cU_i^q$ is the union of the disjoint $D^q_i(k)$, the collection of deformation retracts $ F_i^k$ induces a continuous function $ \cU_i^q\to \Gamma^0_{i-1}$. Let $\varphi^q_i$ be the restriction of this continuous function to $\Gamma^q_i\setminus \Gamma^0_i\subset \cU^q_i$.
	We say that  $\varphi^q_i$ is a {\it collapse map} of generation $i$ and depth $q$.
	
	\begin{cor}\label{collapse ray}
		For $1\leq j\leq m-1$ and $q>0$ and a collapse map $\varphi_j^q$, there is a collapse map $\varphi_{j+1}^q$ which is an extension of $\varphi_j^q$.
	\end{cor}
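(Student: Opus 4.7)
The plan is to define $\varphi_{j+1}^q$ disk by disk on each unmarked disk $D_{j+1}^q$ of generation $j+1$ and depth $q$, arranging the new deformation retract to be compatible with the retracts $F_j^k$ already used in constructing $\varphi_j^q$ on every generation-$j$ unmarked disk $D_j^q(k)$ sitting inside $D_{j+1}^q$, as supplied by \lemref{unmarked nest}. Before beginning, I would note that the domain inclusion $\Gamma_j^q\setminus\Gamma_j^0 \subset \Gamma_{j+1}^q\setminus\Gamma_{j+1}^0$ is automatic (the new edges added in passing from $\Gamma_j^q$ to $\Gamma_{j+1}^q$ are the depth-$q$ preimages of $\rp_{j+1}$, none of which meet $\Gamma_j^0$), and the target inclusion $\Gamma_{j-1}^0\subset\Gamma_j^0$ is tautological, so that \emph{extension} makes sense. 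By \lemref{unmarked ray}, every ray of $\Gamma_{j+1}^q\setminus\Gamma_{j+1}^0$ lies in a unique unmarked disk of generation $j+1$, so it suffices to define the map on these $D_{j+1}^q$.

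Fix such a $D_{j+1}^q$. By \lemref{unmarked nest}, the generation-$j$ unmarked disks that meet $\mathring{D}_{j+1}^q$ are pairwise disjoint subsets $D_j^q(k)\subset D_{j+1}^q$, each satisfying $\varphi(D_j^q(k))\subset\varphi(D_{j+1}^q)$. Each $D_j^q(k)$ already carries the retract $F_j^k$ fixing $\varphi(D_j^q(k))$ pointwise and mapping $\widehat\varphi(D_j^q(k))$ homeomorphically onto $\varphi(D_j^q(k))$. I would then prescribe $F_{j+1}$ to equal $F_j^k$ on every $D_j^q(k)$, to fix $\varphi(D_{j+1}^q)$ pointwise, and to send $\widehat\varphi(D_{j+1}^q)$ homeomorphically onto $\varphi(D_{j+1}^q)$ in any manner consistent with the inner data on the (possibly empty) subarcs $\widehat\varphi(D_j^q(k))\cap\widehat\varphi(D_{j+1}^q)$. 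The complement $D_{j+1}^q\setminus\bigcup_k\mathring{D}_j^q(k)$ is itself a topological disk with prescribed, compatible retraction data on its entire boundary, and the homotopy extension property extends $F_{j+1}$ over it. Taking $\varphi_{j+1}^q$ to be the restriction of the resulting family of disk retracts to $\Gamma_{j+1}^q\setminus\Gamma_{j+1}^0$ yields a continuous collapse map extending $\varphi_j^q$.

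The main obstacle is purely combinatorial: showing that the boundary data for $F_{j+1}$ really do fit together on $\partial D_{j+1}^q$ when some inner disks $D_j^q(k)$ touch $\partial D_{j+1}^q$. At points of $\varphi(D_j^q(k))\subset\varphi(D_{j+1}^q)$ both prescriptions say ``fix,'' so those are consistent automatically; at points of $\widehat\varphi(D_j^q(k))\cap\widehat\varphi(D_{j+1}^q)$ one must verify that the $\varphi(D_j^q(k))$ appear along $\varphi(D_{j+1}^q)$ in the same cyclic order as the corresponding source subarcs along $\widehat\varphi(D_{j+1}^q)$. This is forced by the planar orientation-preservation of each retract $F_j^k$ on the boundary of its disk, combined once more with $\varphi(D_j^q(k))\subset\varphi(D_{j+1}^q)$ from \lemref{unmarked nest}. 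Once this order-compatibility is verified, constructing the required boundary homeomorphism is routine.
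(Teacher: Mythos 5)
Your proposal is correct and follows essentially the same route as the paper, whose entire proof is the terse piecewise formula $\varphi_{j+1}^q(\cR)=\cR$ on $\Gamma_j^0$, $=F_{j+1}^k(\cR)$ on the new rays in $D_{j+1}^q(k)$, and $=\varphi_j^q(\cR)$ on $\Gamma_j^q\setminus\Gamma_j^0$. Your disk-by-disk construction and attention to boundary compatibility via Lemma~\ref{unmarked nest} simply unpacks the compatible choice of retracts $F_{j+1}^k$ that the paper's formula implicitly presupposes (and note that the final extension over $D_{j+1}^q$ needs no claim that the complement of the inner disks is itself a disk, only that the target $\varphi(D_{j+1}^q)$ is contractible).
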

	\begin{proof}
		With the notation from the previous paragraph, let $\varphi^q_{j+1}: \Gamma^q_{j+1}\setminus \Gamma^0_{j+1}\to \Gamma^q_j$ by
		\begin{align*}
		\varphi^q_{j+1}(\cR)=
		\begin{cases}
		\cR \hspace{24mm} &\cR\subset \Gamma^0_j\\
		F^k_{j+1}(\cR) &\cR\subset \Gamma^q_{j+1}\setminus \Gamma^q_j\cap D^q_{j+1}(k)\\
		\varphi^q_j(\cR) &\cR\subset \Gamma^q_j\setminus \Gamma^0_j
		\end{cases}
		\end{align*}
	\end{proof}
	Henceforth, we only work with collapse maps which are extensions on each generation.
	\subsection{The Elastic Ray Portrait Spine}\label{sec:elastic_graph}
	Recall 
	$V_f \subset \Gamma_f$ is a finite invariant set containing the landing points of rays in $\RP_f$. The set $f^{-q}(\Gamma_f)$ is a finite graph whose vertex set is given by $f^{-q}(V_f)$. 
	Let $\Gamma_0$ denote the subgraph of $\Gamma_f$ restricted to $\bS^1_\infty$.
	
	Given an elastic structure on $\Gamma_0$, for each integer $q\geq 1$, we build elastic structures $\omega_q$ on $\Gamma_f$, $(f^q)^*{\omega}_q$ on $f^{-q}(\Gamma_f)$, and $(\omega_q)_*$ on $\Gamma_f$.  
	
	\begin{figure}
		\centering
		\includegraphics[width=0.8\textwidth]{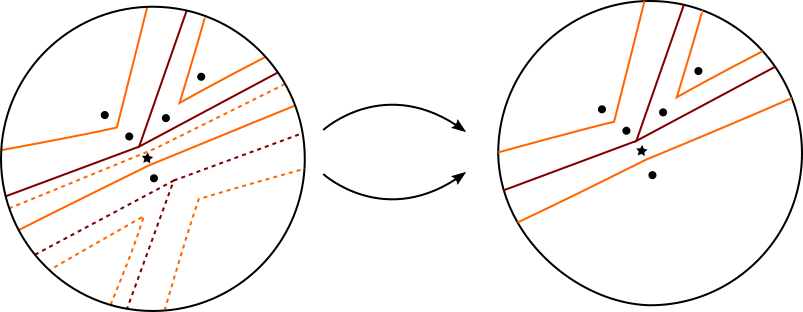}
		\caption{The polynomial $f$ is the rabbit tuned with the basilica has tuning sequence $\langle f_0=z^2,f_1,f_2=f\rangle$.  Here $f_1$ is the rabbit polynomial.  The ray portrait $\rp_1^f$ is shown in dark red solid edges in both figures, and $\rp_2^f$ is shown in light red solid edges in both figures.  The dashed edges in the figure on the left show the set $f^{-1}(\Gamma_f)$.  The critical point is distinguished with a star. There are 2 unmarked sectors.}%A virtual endomorphism for $ (f, \varphi, f^{-1}(\Gamma_f) , \Gamma_f) $ with $ f $ the rabbit tuned with the basillica. Think of $ \varphi $ collapsing the dotted lines.}
		\label{fig:rabbitBasilicaPullbackCollapse}
	\end{figure}

	For each $q\geq 1 $, Corollary \ref{collapse ray} gives a collapse map $\varphi_q $, defined to be the collapse map on the largest generation $m$: $\varphi_m^q : f^{-q}(\Gamma_f) \to \Gamma_f$. 
	
	\begin{thm}\label{ray loosen}
		For any elastic structure $\omega$ on $\Gamma_0$,  we can find
		\begin{enumerate}
			\item an $N \geq 1$,
			\item an elastic graph $G_N = (\Gamma_f, \omega_N)$ extending $\omega$ on $\Gamma_f$, and
			\item an elastic collapse map $\psi_N : (f^N)^*(G_N) \to (\varphi_N)_*(f^N)^*(G_N)$
		\end{enumerate}
		so that $\Emb[\psi_N] < 1$.
	\end{thm}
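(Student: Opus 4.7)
The plan is to mirror the Hubbard-tree construction of Theorem 3.3 but inject strict slack into the pushforward so the embedding energy drops strictly below one. First, fix any positive extension $\omega_N$ of $\omega$ to the edges of $\Gamma_f$ (these interior weights are free). For a generic $y$ on an edge $e_0 \subset \Gamma_f$, I enumerate the preimages $\psi_N^{-1}(y) = \varphi_N^{-1}(y)$ using Corollary 4.6 and Lemma 4.5: there is the identity preimage $y$ itself, viewed as sitting inside some sub-edge $\tilde e \subset e_0$ of the refined graph $\Gamma_m^N = f^{-N}(\Gamma_f)$, together with one collapse preimage per external ray $\cR \subset \Gamma_m^N \setminus \Gamma_m^0$ whose image $\varphi_N(\cR) \subset \partial D_i^q$ covers $y$. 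The nesting statement (Lemma 4.5) organizes the collapse preimages inductively by generation without double counting, and the deformation retracts $F_i^k$ built in Section 4.1 describe the collapse affinely.

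Next I construct the pushforward. Following the Hubbard template, the natural choice is
$$(\varphi_N)_*(f^N)^*\omega_N(e_0) \;=\; \sum_{\tilde e \subset e_0}(f^N)^*\omega_N(\tilde e) \;+\; \sum_{\cR\text{ collapses onto }e_0}\tfrac{1}{c_{\cR,e_0}}\,(f^N)^*\omega_N(\cR),$$
where $c_{\cR, e_0}$ is a combinatorial constant determined by the retract $F_i^k$ sending the unmarked disk $D_i^q$ to $\varphi(D_i^q)$; this natural choice yields $\Emb \leq 1$. To upgrade to the strict inequality, I use the non-uniqueness of the pushforward (flagged in the paragraph before Section 3) and multiply the natural length of each edge $e_0$ by a uniform factor $1+\delta$ for some $\delta > 0$. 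The derivative calculation at each $y \in \Gamma_f$ then gives
$$\sum_{x \in \psi_N^{-1}(y)}|\psi_N'(x)| \;\leq\; \frac{1}{1+\delta} \;<\; 1,$$
uniformly in $y$.

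The role of $N$ is to guarantee the inflated pushforward is actually realizable as the image lengths of a PL collapse map in the correct homotopy class. By analogy with Theorem 3.4 in the Hubbard setting, taking $N$ large inflates the pulled-back weights on $\Gamma_f$ so that they dwarf the fixed slack, letting us absorb the $(1+\delta)$ factor without contradiction with the underlying combinatorial structure of $\psi_N$.

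\textbf{Main obstacle.} The principal technical hurdle is verifying that the slack factor $1+\delta$ can be installed \emph{globally consistently} across all edges of $\Gamma_f$. The combinatorial coefficients $c_{\cR,e_0}$ couple edges across generations, so inflating one edge can distort the identity-vs-collapse balance on another. Overcoming this requires an induction on the generation $1 \leq i \leq m$, using Lemma 4.5(ii) to ensure nested unmarked disks contribute compatibly and Lemma 4.2 to pin each collapse image segment to a strictly lower generation $\Gamma_{i-1}^0$. Once this inductive bookkeeping closes, the pointwise derivative bound propagates and delivers $\Emb[\psi_N] < 1$.
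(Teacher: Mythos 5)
Your proposal misses the essential mechanism of the paper's proof and, as written, the central derivative bound fails. The pushforward formula you propose \emph{inflates} each target edge by adding the collapsed-ray contributions: for $e_0 \in \rp_i$ ($i>0$), $e_0$ is a single edge in $f^{-N}(\Gamma_f)$ with pullback length $(f^N)^*\omega_N(e_0)=\omega_N(e_0)$, so $(\omega_N)_*(e_0)=\omega_N(e_0)+\sum_{\cR}c_{\cR,e_0}^{-1}(f^N)^*\omega_N(\cR)>\omega_N(e_0)$, and the identity preimage of a point of $e_0$ already has derivative $(\omega_N)_*(e_0)/\omega_N(e_0)>1$. The paper does the opposite: it \emph{deflates} by setting $(\omega_N)_*(e)<\omega_N(e)-\epsilon$ on $\RP_f$ (and uses the expansion of angle-doubling on $\bS^1_\infty$ to get the same slack on generation-$0$ edges), so the identity contribution is strictly below $1$, leaving room for the collapse terms. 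Your subsequent multiplication by $1+\delta$ is also in the wrong direction (inflating the codomain's elastic structure increases derivatives, not decreases them, since $|\psi'| = (\text{target length})/(\text{source length})$), and even after flipping the sign it does not repair the issue above.

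More fundamentally, the claim that ``any positive extension $\omega_N$'' works, with a natural pushforward yielding $\Emb\le 1$, is false. When a point $y$ lies on a generation-$i$ edge, its preimages under $\psi_N$ include rays of every generation $j>i$, each contributing derivative roughly $W_i/W_j$. If all $W_i$ are comparable (e.g., all equal to 1), these contributions can exceed any slack one could safely install at the identity preimage. The paper's solution is the hierarchical weight assignment $W_i > \epsilon^{-1} W_{i-1}^2 L_N m$ (super-exponential growth in $i$), which makes every cross-generation term $\ell_j W_i/W_j$ small enough that their total sum over all $j>i$ is $<\epsilon/W_i$, exactly absorbing the $\epsilon$-slack at the identity preimage. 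You correctly flag the cross-generation coupling as the ``main obstacle,'' but the proposed fix (an unspecified induction on generation) does not supply a mechanism; the hierarchical weights are the needed mechanism and they are not recoverable from your setup. The role of $N$ is also different: the paper chooses $N$ to make the pullback of the circle-at-infinity fine enough that each $\Gamma_0$-edge's identity derivative drops to $(W_0-\epsilon)/W_0$, not to ``make the inflated pushforward realizable.''
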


	\begin{proof}  
		Fix $\epsilon > 0$. Let $W_0 = \sup \{\omega(e): e\subset \Gamma_0\} $.
		
		To find $N$, note first that that for $0\leq i< j$ the vertex set of $\Gamma_0^i$ is a subset of the vertex set of $\Gamma_0^j$. So, for every edge $e\subset \Gamma_0^0$, there exists an associated set of edges $Y_q(e)\subset \Gamma_0^q$ such that as subsets of $\bS^1_\infty$, $e$ is the union of edges $e'\in Y_q(e)$.
		Moreover, the backwards orbit of any point under angle doubling is dense in $S^1_\infty$, so $\min_{e\in I_0}|Y_q(e)|\to \infty$ as $q\to\infty$.
		Thus we can choose $N$ large enough such that for each $e\in S^1_\infty$ we have
		\begin{align}\tag{*}\label{eq:CircleAtInfinityLoosening}
		\dfrac{\omega(e)}{(f^N)^*\omega(Y_N(e))}=
		\dfrac{\omega(e)}{\sum\limits_{e'\in Y_N(e)}\omega(e')}<\dfrac{W_0-\e}{W_0} 
		\end{align}
		
		Now given an elastic structure $\omega$ on $\Gamma_0$, we will extend $\omega$ to an elastic structure $\omega_N$ on all of $\Gamma_f$. To do so, we need to assign an elastic length $\omega_N(e)$ to each edge $e \in \RP_f$. The choice of elastic length for each ray depends both on $N$ and on its generation. Let $L_N$ be the number of edges in $f^{-N}(\Gamma_f)$. For $0 < i \leq m$, we choose $W_i$ so that
		$$W_i > \e^{-1} \cdot W_{i-1}^2 \cdot L_N \cdot m$$ 
		For each edge $e_i \in \rp_i$, we assign
		$\omega_N(e_i) = W_i.$
		
		Let $G_N = (\Gamma_f, \omega_N)$. We define a elastic collapse structure $(\omega_N)_*$ so that 
		\[ (\omega_N)_*(e) < \omega_N(e) - \epsilon \]
		for all $e \in \RP_f$. This gives us an elastic collapse map
		\[ \psi_N : (f^N)^*(G_N) \to (\varphi_N)_*(f^N)^*(G_N). \]

		Consider $\cR^i$ an edge in $f^{-N}(\rp_i)\setminus \rp_i$. By Lemma \ref{unmarked ray}, there is a unique unmarked disk $D_i^N$ such that $\cR^i\subset D_i^N$ which by Corollary \ref{collapse ray} collapses as $\psi_N(D^N_i)\subset \Gamma^0_{i-1}$. Thus outside of $\Gamma_f$, $\psi_N$ maps edges of generation $i$ to elements of generation at most $i-1$.
		That is, if $y\in (\varphi_N)_*(f^N)^*(G_N)$, then edge $y\in\rp_i$ for some $0\leq i\leq m$. We let
		\[\tilde{Y}=\begin{cases}
		y, \, \text{ if } i>0\\
		Y_N(y),\, \text{ if } i=0
		\end{cases}\]
		Since $\psi_N$ is the identity map on $\Gamma_f$,
		we can write
		\[\psi_N^{-1}(y)=\tilde{Y}\cup\bigcup_{j>i}\,\bigcup_{1\leq k \leq \ell_j}\{x_{k}^j\}\]
		where $x_k^j\in f^{-N}(\rp_j)\setminus \rp_j$ for $1\leq k\leq \ell_j$ for some finite $\ell_j<L_N$.
		
		Now we prove the theorem:
		Fix $y\in (\varphi_n)_*(G_N)\setminus{V_f}$ with $y\in \rp_i$, $0\leq i\leq m$. 
		\begin{align*}
		\sum_{x\in \psi_N^{-1}(y)} \psi_N'(x) &= \psi_N'(\tilde{Y})+\sum_{j>i}\,\sum_{1\leq k \leq \ell_j} \psi_N'(x^j_k)\\
		&=\dfrac{(\omega_N)_*(y)}{(f^N)^*\omega_N(\tilde Y)}
		+\sum_{j>i}\,\sum_{1\leq k\leq \ell_j} \dfrac{(\omega_N)_*(y)}{(f^N)^*\omega_N(x^j_k)}\\
		&< \dfrac{W_i-\e}{W_i} + \sum_{j>i}\ell_j\dfrac{W_i}{W_j} \hspace{5mm} \text{by definition of metric and (} \ref{eq:CircleAtInfinityLoosening} \text{)}\\
		&<1-\dfrac{\e}{W_i}+\sum_{j>i}\ell_j\left(\dfrac{\e}{L_N\cdot m\cdot W_i}\right)
		<1-\dfrac{\e}{W_i}+\dfrac{\e}{W_i}< 1
		\end{align*}
		It follows that $\Emb[\psi_N]<1$.
	\end{proof}

	\section{The Linking Lemmma}
	Before proving \thmref{thm:main}, we prove Lemma \ref{lem:landing}, which states that polynomials in non-conjugate limbs cannot have mated rays landing at the same point of $J_g$.
	\label{sec:LinkingLemma}

	\p{Linking} Recall from section {2} we think of mated rays $ \cR^{f,g}_t $ as a function of $ \cR^f_t\in \RP_f $ which lands at some $ x\in J_g $. Let $\langle f_0,\dots,f_m=f\rangle$ be the tuning sequence for $f$.  Let $\rp_i$ denote the ray portrait for $\rp_i$.  Throughout this section, we will distinguish a ray portrait $ \rp_i $ of $ f $ from the induced mated ray portrait $ \rp_i^{f,g} $. Moreover,
	the collection of the landing points of these mated rays rays will be denoted by
	\[\cO^g =\{x\, | \, x\in J_g\text{ is the landing point of some } \cR_{t}^{f,g}\text{ with }t\in\cP(f)\}.\]

	\p{Characteristic mated rays}  Let $t_-$ and $t_+$ be the angles in $S^1_\infty$ so that $\cR_{t_-}^f$ and $\cR_{t_+}^f$ are the characteristic rays of $f$ in the ray portrait $\rp_i^f$. We define the {\it characteristic mated rays} of $\rp_i^{f,g}$ as the rays $\cR_{t_-}^{f,g}=\cR_{t_-}^f\cup\cR_{-t_-}^g$ and $\cR_{t_+}^{f,g}=\cR_{t_+}^f\cup\cR_{-t_+}^g$ in $\rp_i^{f,g}$.

	The next goal is to prove:
	
	\begin{lem}\label{lem:landing} 
		If two mated rays in $\Gamma_{f,g}$ land at a common point on $J_g$, then $f$ and $g$ are in conjugate limbs of $\cM$.
	\end{lem}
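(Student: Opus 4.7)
Assume $\cR^{f,g}_{t_1}$ and $\cR^{f,g}_{t_2}$ in $\Gamma_{f,g}$ both land at a common point $x\in J_g$. Unpacking the mating identification, this means the $g$-rays $\cR^g_{-t_1}$ and $\cR^g_{-t_2}$ share the landing point $x$, so $\{-t_1,-t_2\}$ is contained in the orbit portrait of $x$ for $g$. Meanwhile both $t_1,t_2$ lie in $\cP(f)=\bigcup_i \cP_i^f$ because $\cR^f_{t_1},\cR^f_{t_2}\in\RP^f$. The plan is to use the nested characteristic-sector structure of $\Gamma_f$ from \lemref{lem:homeo_sector} and \lemref{lem: sector_portrait} to reduce to the case where $t_1,t_2$ are the characteristic angles of a single portrait $\cP_i^f$, then descend through the tuning sequence to $f$'s primary limb, and finally identify the conjugate limb for $g$.

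First I would locate the minimal generation: let $i$ be the smallest index with $t_1,t_2\in\cP_i^f$, and claim that $\{t_1,t_2\}=\{t_-^i,t_+^i\}$, the characteristic angles of $\cP_i^f$. If not, then by \lemref{lem: sector_portrait}(a) the only rays of $\rp_i$ meeting the interior of the characteristic sector $S'_{i-1}$ land at its characteristic orbit point, so at least one of $\cR^f_{t_1},\cR^f_{t_2}$ lies in a non-characteristic generation-$i$ sector. Using the homeomorphism supplied by \lemref{lem:homeo_sector}, some iterate $f_i^\ell$ carries that ray into a sector of lower generation, producing a ray in $\cP_j^f$ for some $j<i$ whose common-landing partner also descends. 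This would contradict the minimality of $i$, so indeed $\{t_1,t_2\}=\{t_-^i,t_+^i\}$.

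With this reduction in hand, I would iterate the sector homeomorphism back through the tuning sequence $\langle f_0,\ldots,f_i\rangle$. Each characteristic angle pair $t_\pm^j$ of $\cP_j^f$ is the image, under the renormalization supplied by \lemref{lem:homeo_sector}, of the characteristic angle pair of $\cP_{j-1}^f$ twisted by the satellite rotation number of $\mathcal{H}_j$ over $\mathcal{H}_{j-1}$. Iterating down to generation $1$, the pair $t_\pm^i$ lies in the arc of $\bS^1_\infty$ bounded by the primary characteristic angles $\alpha_-,\alpha_+$ of $\cP_1^f$. These primary angles determine $f$'s limb in $\cM$: we have $f\in L_{p/q}$, where $p/q$ is the rotation number of $\mathcal{H}_1$ at the main cardioid.

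Finally I would analyze $g$. Because $\cR^g_{-t_1}$ and $\cR^g_{-t_2}$ land together on $J_g$, they bound a region of $\widetilde{\bbC}_g$ on one side of which lies the $g$-critical value; the two $g$-rays at these angles, together with this critical-value separation, force the $g$-portrait of $x$ to have characteristic angles bracketing the arc $(-\alpha_+,-\alpha_-)$. Using Milnor's classification of primary portraits of quadratic polynomials, the unique limb whose primary characteristic angles are $-\alpha_+,-\alpha_-$ is the $((q-p)/q)$-limb $L_{-p/q}$, the complex conjugate of $L_{p/q}$. Hence $g\in L_{-p/q}$ and $f,g$ sit in conjugate limbs. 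The hardest step is the descent in paragraph three, which requires careful bookkeeping of how rotation numbers and characteristic angles transform under each renormalization of \lemref{lem:homeo_sector}; a secondary subtlety is that $g$ is not assumed to be in the main molecule, so one cannot apply the same tuning descent on $g$ and must instead extract the limb of $g$ directly from the $g$-orbit portrait at $x$.
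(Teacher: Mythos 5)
Your proposal runs into a genuine gap at the reduction step in the second paragraph. You claim that if $i$ is the minimal generation with $t_1,t_2\in\cP_i^f$ and $\{t_1,t_2\}$ is not the characteristic pair $\{t_-^i,t_+^i\}$, then some iterate $f_i^\ell$ ``produces a ray in $\cP_j^f$ for $j<i$.'' This cannot happen: each ray portrait $\rp_i$ is forward-invariant, so applying $f^\ell$ to a ray of $\rp_i$ gives another ray of $\rp_i$, never one of strictly lower generation. Moreover, Lemma~\ref{lem: sector_portrait}(a) says the rays of $\rp_i$ meeting $S'_{i-1}$ all land at the unique characteristic orbit point $z_0$, but in the satellite case there are $k>2$ such rays, only two of which are the characteristic angles $t_\pm$; so even two rays of $\rp_i$ landing at $z_0$ need not be $\{t_-^i,t_+^i\}$. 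Your minimality argument therefore does not force $\{t_1,t_2\}$ to be the characteristic pair.

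The paper avoids this trap entirely. It does \emph{not} try to show that the given pair $t_1,t_2$ is characteristic; instead, it proves that \emph{if any} two rays of $\rp_i^{f,g}$ land together, \emph{then the characteristic ones do too}. The machinery is: Lemma~\ref{lem:even_equiv} (an orbit--stabilizer argument, using that $f\sqcup g$ permutes the landing points transitively) shows every landing point of $\rp_i^{f,g}$ receives the same number of rays, so in particular both characteristic landing points receive at least two. Then Lemma~\ref{lem:homeo_sector} gives an orientation-preserving homeomorphism $f^\ell$ which transports the extra ray at $t_-$ to the extra ray at $t_+$ while preserving cyclic order, forcing the two pairs to be \emph{linked} in $\bS^1_\infty$. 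The linking lemma (Lemma~\ref{lem:equivlinkin}), a Jordan-curve argument in $\widetilde{\C}_g$, then concludes that all four rays land at a single point of $J_g$. Only after this does the paper invoke Milnor's portrait dichotomy (Lemma~\ref{lem:conj_wakes}), and it does so for the portrait at generation $i$ directly, with no descent through the tuning sequence; your third and fourth paragraphs therefore also diverge from the paper's route and carry the unjustified burden of tracking how characteristic angles transform under each renormalization. In short, the essential missing idea is the combination of orientation preservation with the linking argument; without it, there is no way to pass from an arbitrary coincident pair to the characteristic pair.
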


	We prove Lemma \ref{lem:landing} in two main steps:
	\begin{enumerate}
		\item If the characteristic mated rays of some $\rp_i^{f,g}$ land at the same point of $J_g$, then $f$ and $g$ are in conjugate wakes (hence, limbs).  This is Lemma \ref{lem:conj_wakes}.
		\item If there is some $i$ so that any two mated rays of $\rp_i^{f,g}$ land at the same point of $J_g$, then the characteristic mated rays of $\rp_i^{f,g}$ land at the same point of $J_g$.  Lemmas \ref{lem:even_equiv} and \ref{lem:equivlinkin} are intermediate steps towards this goal.
	\end{enumerate}

	\begin{lem}\label{lem:conj_wakes}
		If there exists $1\leq i\leq m$ so that the characteristic mated rays of $f\sqcup g$ for the mated ray portrait $\rp^{f,g}_i$ land at the same point of $J_g$, then $f$ and $g$ are in conjugate wakes.
	\end{lem}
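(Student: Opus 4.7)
The approach is to translate the given dynamical-plane landing data into parameter-space landing data via Milnor's correspondence between orbit portraits and parameter wakes, and then exploit the complex-conjugation symmetry of $\cM$.

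First, I would locate $f$ inside a specific parameter wake of $\cM$. Since $\{t_-, t_+\}$ is the characteristic pair of the ray portrait $\rp_i^f$, by Milnor's wake theorem the parameter rays at angles $t_-$ and $t_+$ land at the common parameter $\hat c_i$ (the root of $\mathcal{H}_i$) and together bound a parameter wake $\mathcal{W}_i \subset \cM$. Because the tuning sequence provides the nesting $\mathcal{H}_m \subset \mathcal{W}_m \subset \cdots \subset \mathcal{W}_i$, we conclude $c_1 \in \mathcal{W}_i$, i.e., $f \in \mathcal{W}_i$.

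Next, I would locate $g$ inside the parameter wake bounded by the rays at angles $-t_-$ and $-t_+$. By hypothesis, $\cR^g_{-t_-}$ and $\cR^g_{-t_+}$ land at a common point $z \in J_g$. Since $t_\pm$ are periodic under angle doubling, so are $-t_\pm$, hence $z$ is periodic and necessarily repelling (multiple rational rays land there), and $\{-t_-, -t_+\}$ lies inside the orbit portrait $\cP(z)$ of $g$. The key claim is that $\{-t_-, -t_+\}$ is in fact the characteristic pair of $\cP(z)$: the map $t \mapsto -t$ reverses orientation on $S^1$, but this orientation reversal is exactly the one built into the mating identification $\cR^f_t \sim \cR^g_{-t}$, so the short arc bounding the critical value for $f$ transports to the short arc bounding the critical value for $g$. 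Milnor's wake theorem then produces parameter rays at $-t_-$ and $-t_+$ landing at a common root on $\partial \cM$ and bounding a parameter wake $\mathcal{W}'$ containing $g$.

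Finally, I would invoke the complex-conjugation symmetry $c \mapsto \bar c$ of $\cM$, under which each parameter ray $\cR_t$ is sent to $\cR_{-t}$. This shows $\mathcal{W}'$ is precisely the complex conjugate of $\mathcal{W}_i$, so $f \in \mathcal{W}_i$ and $g \in \mathcal{W}'$ places $f$ and $g$ in conjugate wakes. The main obstacle is the middle step, namely confirming that $\{-t_-, -t_+\}$ is the characteristic pair of $\cP(z)$ for $g$ rather than an arbitrary adjacent pair within it; this will require carefully tracking the orientation-reversing identification across $\bS^1_\infty$ together with the uniqueness of the characteristic pair of a quadratic orbit portrait (determined by which complementary arc contains the critical value).
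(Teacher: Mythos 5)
Your strategy is the same one the paper uses in spirit (carry the data from the dynamical plane of $g$ into parameter space via Milnor's wake theorem, then invoke the $c\mapsto\bar c$ reflection symmetry of $\cM$, under which parameter ray angles negate), and the parameter-space scaffolding you add is fine. But you have a genuine gap that you have not correctly diagnosed. You only establish $\{-t_-,-t_+\}\subseteq \cP(z)$, the orbit portrait of the landing point $z$ for $g$. If $\cP(z)$ were strictly larger than the negated portrait $\{-A_j\}$ (extra rays landing at $z$ or at other points of its orbit), then its characteristic arc would be shorter than $(-t_+,-t_-)$ and the wake bounded by $-t_\pm$ would not be the one containing $g$. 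Your proposed fix --- ``tracking the orientation-reversing identification and the uniqueness of the characteristic pair'' --- addresses only a secondary point: once one knows $\cP(z)=\{-A_j\}$, the fact that $\{-t_-,-t_+\}$ is the characteristic pair is immediate, because $t\mapsto -t$ is a length-preserving bijection of $S^1$, hence carries the shortest complementary arc of $\cP_i$ to the shortest complementary arc of $\{-A_j\}$; no bookkeeping about the critical value is needed.

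The missing step, which the paper makes explicit, is the satellite/primitive dichotomy for quadratic orbit portraits (Milnor, Lemma 2.7). Since $f$ is in the main molecule, $\cP_i$ is a satellite portrait, so $t_-$ and $t_+$ lie in a single angle-doubling orbit, hence so do $-t_-$ and $-t_+$. Two rays of $g$ in the same doubling orbit landing at the common point $z$ rules out the primitive case for $\cP(z)$ (where exactly two rays land, from distinct orbits), so $\cP(z)$ is satellite. In the satellite case all angles of $\cP(z)$ form a single doubling orbit, which must be the orbit of $-t_-$, i.e.\ exactly $\{-A_j\}$. Only after this identification does the characteristic-pair computation you flag (and then the wake/symmetry argument you outline) go through. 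You should note that the paper actually concludes directly from Milnor's Corollary~1.3 at this point, without spelling out the parameter-wake nesting you describe; your extra steps are harmless but not needed once $\cP(z)=\{-A_j\}$ is in hand.
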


	\begin{proof}
		Let $\cR^{f,g}_{t_+}$ and $\cR^{f,g}_{t_-}$ be the characteristic rays of $\rp^{f,g}_i$. 
		The rays $\cR_{t_-}^{f,g},\cR_{t_+}^{f,g}$ are periodic under the action of $f\sqcup g$, call the period $n_i$.  Therefore, the ray portrait $\rp^{f,g}_i$ is comprised of the (finite) union of rays 
		$$\bigcup_{k=1}^{n_i}\{(f\sqcup g)^k(\cR^{f,g}_{t_-})\cup (f\sqcup g)^k(\cR^{f,g}_{t_+})\}.$$

		Now consider the set of rays $\{\cR^g_{t^k_-}, \cR^g_{t^k_+}\}$.  These rays form a single orbit under $g$. We claim that their union is a full ray portrait for $g$ associated to some orbit $\cO^g_i$. Indeed, ray portraits for polynomials fall into one of two categories (eg. see Lemma 2.7 in \cite{Milnor}): the rays in the portrait might are either in a single orbit under $g$ (the \emph{satellite case}) or exactly two mated rays land at each point of $\cO_i^g$, and the two rays are in distinct orbits under the action of $g$ (the \emph{primitive case}).
		
		Notice that here, we are necessarily looking at a \emph{satellite} angle portrait for $g$, since $\rp^g_i$ has two rays in a single orbit landing at a common point in $J_g$ -- namely, the rays $\cR^g_{t_-}$ and $\cR^g_{t_+}$. Therefore, $\rp^g_i$ is in fact a full ray portrait associated to $g$.

		By Proposition \cite{Milnor} (Cor 1.3), $f$ and $g$ are in conjugate wakes.
	\end{proof}

	Next we show that if the characteristic mated rays of $f\sqcup g$ land at the same point of $J_g$, there is a sequence of angle portraits of $g$ conjugate to the angle portraits of $f_i.$

	\begin{lem}\label{lem:even_equiv} 
		If $\cR^{f,g}_{t}$ and $\cR^{f,g}_{u}\in \rp_i^{f,g}$ land at the same point of $J_g$, then $(f\sqcup g)(\cR^{f,g}_{t})$ and $(f\sqcup g)(\cR^{f,g}_{u})\in \rp_i^{f,g}$ land at the same point of $J_g$. In particular, the number of rays of $\rp_i^{f,g}$ landing at any landing points of $\rp_i^{f,g}$ in $J_g$ is constant.
	\end{lem}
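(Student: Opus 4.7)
The plan is to exploit that the formal mating $f \sqcup g$ restricts to $\widetilde g$ on $\widetilde{\bbC}_g$, so the landing-point dynamics on the $g$-hemisphere is exactly that of $g$. Since a mated ray $\cR^{f,g}_t = \cR^f_t \cup \cR^g_{-t}$ has its landing point on $J_g$ equal to the landing point of its $g$-portion, the hypothesis that $\cR^{f,g}_t$ and $\cR^{f,g}_u$ co-land at $x \in J_g$ is precisely the statement that $\cR^g_{-t}$ and $\cR^g_{-u}$ both land at $x$.

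The first step is to apply $g$ and use continuity at $x$: the image rays $g(\cR^g_{-t}) = \cR^g_{-2t}$ and $g(\cR^g_{-u}) = \cR^g_{-2u}$ both land at $g(x) \in J_g$. Reassembling, the mated rays $(f\sqcup g)(\cR^{f,g}_t) = \cR^{f,g}_{2t}$ and $(f\sqcup g)(\cR^{f,g}_u) = \cR^{f,g}_{2u}$ co-land at $g(x)$. To show these images remain in $\rp_i^{f,g}$, I would note that $\cP_i$ is closed under angle-doubling (being the angle portrait of the $f$-periodic orbit $\cO_i$), and since negation $t \mapsto -t$ commutes with doubling, the $g$-side angle set $-\cP_i$ is also doubling-invariant. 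Thus $\rp_i^{f,g}$ is $(f\sqcup g)$-invariant and, by finiteness, the action is a permutation.

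For the ``in particular'' claim, define $\nu(v) := \#\{r \in \rp_i^{f,g} : r \text{ lands at } v\}$ on the finite set of $g$-side landing points. The main statement combined with injectivity of $f \sqcup g$ on $\rp_i^{f,g}$ yields $\nu(v) \leq \nu(g(v))$. Because $g$ permutes the finite set of landing points, summing gives $\sum_v \nu(v) = \sum_v \nu(g(v))$, so each inequality must be equality and $\nu$ is $g$-invariant, i.e., constant along each $g$-orbit of landing points. Promoting this to a single constant value across all landing points uses that the landing points form one $g$-orbit, which follows from the bijection $\cR^f_t \leftrightarrow \cR^{f,g}_t$ intertwining $f$ and $f\sqcup g$ together with the single orbit $\cO_i$ on the $f$-side.

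The main obstacle is keeping straight the interplay between the $t \leftrightarrow -t$ identification and the angle-doubling dynamics, and specifically the final reduction to a single $g$-orbit of landing points, which is essentially a combinatorial check against the primitive/satellite dichotomy for orbit portraits. Once the sign conventions are nailed down, dynamical continuity of $g$ at a point of $J_g$ does all the real work.
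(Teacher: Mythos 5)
Your proof is correct and is essentially an unpacked version of the paper's own two-sentence argument: the paper observes that the cyclic group $\langle f\sqcup g\rangle$ acts transitively on the landing points and invokes the Orbit--Stabilizer Theorem, which is exactly the equivariance-plus-transitivity-plus-finiteness argument you spell out via continuity of $g$ and the fiber-counting inequality. Your version is more explicit (and more careful than the paper about restricting to the landing points of a single $\rp_i^{f,g}$ rather than all of $\cO^g$), but the mathematical content is the same.
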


	\begin{proof}
	Observe that the cyclic group generated by $f\sqcup g$ acts transitively on $\cO_g$. Thus the lemma follows from the Orbit-Stabilizer Theorem.
	\end{proof}

	\p{Linked rays} Let $t_1,t_2,u_1,u_2 \in \ap_i^{f,g}$.  We say the pairs of rays $(\cR_{t_1}^{f,g},\cR_{u_1}^{f,g})$ and $(\cR_{t_2}^{f,g},\cR_{u_2}^{f,g})$ are \emph{unlinked} if there exist disjoint intervals $T_1$ and $T_2$ of $S^1_\infty$ so that $\{t_1,u_1\}\subset T_1$ and $\{t_2,u_2\}\subset T_2$.  We say that two pairs of rays are {\it linked} if they are not unlinked, see Figure \ref{fig:linkingAtInfinity}.
	
	We say that two sets of rays $B_1,B_2\in \rp_i^{f,g}$ with $|B_1|=|B_2|>1$ are \emph{linked} if there are $\cR_{t_1}^{f,g},\cR_{u_1}^{f,g}\in B_1$ and $\cR_{t_2}^{f,g},\cR_{u_2}^{f,g}\in B_2$ so that $(\cR_{t_1}^{f,g},\cR_{u_1}^{f,g})$ and $(\cR_{t_2}^{f,g},\cR_{u_2}^{f,g})$ are linked.
	\begin{lem}\label{lem:equivlinkin}
		Let $w_1, w_2 \in J_g$ be landing points of rays of $\rp_i^{f,g}$ for some $1\leq i\leq k$.  Let $B_1$ and $B_2$ be the set of all rays of $\rp_i^{f,g}$ that land at $w_1$ and $w_2$ respectively.  If $B_1$ and $B_2$ are linked, then $w_1=w_2$.
	\end{lem}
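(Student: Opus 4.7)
The plan is to argue by contradiction, working inside the single disk $\tilde{\bbC}_g$ and exploiting the standard fact that distinct external rays of a polynomial have disjoint interiors. Suppose for contradiction that $w_1 \neq w_2$. Since $B_1$ and $B_2$ are linked, we may extract witnesses $\cR^{f,g}_{t_1}, \cR^{f,g}_{u_1} \in B_1$ and $\cR^{f,g}_{t_2}, \cR^{f,g}_{u_2} \in B_2$ such that $(\cR^{f,g}_{t_1}, \cR^{f,g}_{u_1})$ and $(\cR^{f,g}_{t_2}, \cR^{f,g}_{u_2})$ are linked pairs.

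First, I would pass from the mating sphere to just the $g$-hemisphere. For $i=1,2$, set
\[
\alpha_i \;=\; \cR^g_{-t_i} \,\cup\, \{w_i\} \,\cup\, \cR^g_{-u_i} \;\subset\; \tilde{\bbC}_g.
\]
Each $\alpha_i$ is a simple arc in the closed disk $\tilde{\bbC}_g$ connecting the two boundary points $-t_i, -u_i \in S^1_\infty = \partial\tilde{\bbC}_g$ and passing through the interior point $w_i \in J_g$. Because distinct external rays of $g$ have pairwise disjoint interiors and $w_1 \neq w_2$, the arcs $\alpha_1$ and $\alpha_2$ are disjoint.

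Second, I would invoke the standard planar separation fact: two disjoint simple arcs in a closed disk with endpoints on the boundary must have unlinked endpoint pairs on that boundary. Concretely, $\alpha_1$ together with one of the two arcs of $S^1_\infty$ between $-t_1$ and $-u_1$ forms a Jordan curve in $\tilde{\bbC}_g$; being disjoint from $\alpha_1$, the arc $\alpha_2$ lies entirely in one of the two complementary regions of this Jordan curve, so its endpoints $-t_2$ and $-u_2$ lie in a common component of $S^1_\infty \setminus \{-t_1, -u_1\}$. Hence the pairs $(-t_1, -u_1)$ and $(-t_2, -u_2)$ are unlinked on $S^1_\infty$.

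Finally, since the identification $t \leftrightarrow -t$ that defines the mating is a homeomorphism of $S^1_\infty$, it carries unlinked pairs to unlinked pairs, so $(t_1, u_1)$ and $(t_2, u_2)$ are unlinked as well. This contradicts the choice of witnesses, and we conclude $w_1 = w_2$. I do not expect a serious obstacle: the whole content is a Jordan curve theorem argument inside one copy of $\tilde{\bbC}_g$, with the mating only entering via the orientation-reversing relabeling $t \mapsto -t$. The one place that deserves a careful sentence in the final write-up is verifying that $\alpha_1$ and $\alpha_2$ are genuinely disjoint simple arcs, which reduces to the disjointness of distinct external rays together with $w_1 \neq w_2$.
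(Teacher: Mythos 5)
Your proof is correct and uses essentially the same Jordan-curve/planar-separation argument as the paper's, differing only in logical packaging: you argue by contradiction (disjoint arcs force unlinked endpoints), while the paper argues directly that linked endpoints force the arcs to intersect, which by disjointness of ray interiors can only happen at a common landing point. Your explicit handling of the $t \mapsto -t$ reindexing when restricting to $\widetilde{\bbC}_g$ is slightly more careful than the paper's, which quietly writes $\{t_1, u_1\}$ where the boundary points are really $\{-t_1, -u_1\}$.
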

	\begin{proof}
		If $B_1,B_2$ are linked, there exist $\cR_{t_1}^{f,g},\cR_{u_1}^{f,g}\in B_1$ and $\cR_{t_2}^{f,g},\cR_{u_2}^{f,g}\in B_2$ so that $(\cR_{t_1}^{f,g},\cR_{u_1}^{f,g})$ and $(\cR_{t_2}^{f,g},\cR_{u_2}^{f,g})$ are linked.

		The intersection of $\cR_{t_1}^{f,g}\cup\cR_{u_1}^{f,g}$ with $\widetilde{\bbC}_g$ is an arc that separates $\widetilde\bbC_g$ in to two components.  The intersection of $\cR_{t_2}^{f,g}\cup\cR_{u_2}^{f,g}$ with $\widetilde{\bbC}_g$ is also an arc in $\widetilde{\bbC}_g$.  The endpoints of $\cR_{t_2}^{f,g}\cup\cR_{u_2}^{f,g}\cap\widetilde{\bbC}_g$ are in different components of $
		S^1_\infty\setminus\{t_1,u_1\}$.  By the Jordan curve theorem, the arcs $(\cR_{t_2}^{f,g}\cup \cR_{u_2}^{f,g})\cap \widetilde{\bbC}_g$ and $(\cR_{t_1}^g\cup \cR_{u_1}^g)\cap\widetilde{\bbC}_g$ must intersect in at least one point in $\widetilde\bbC_g$.  But external rays cannot intersect in their interiors, therefore, $\cR_{t_1}^{f,g},\cR_{u_1}^{f,g},\cR_{t_2}^{f,g},\cR_{u_2}^{f,g}$ all land at a common point in $J_g$.  Therefore $w_1=w_2$.
	\end{proof}

	\begin{figure}
		\centering
		\graphicspath{{figures/}}
		\def\svgwidth{0.4\textwidth}
		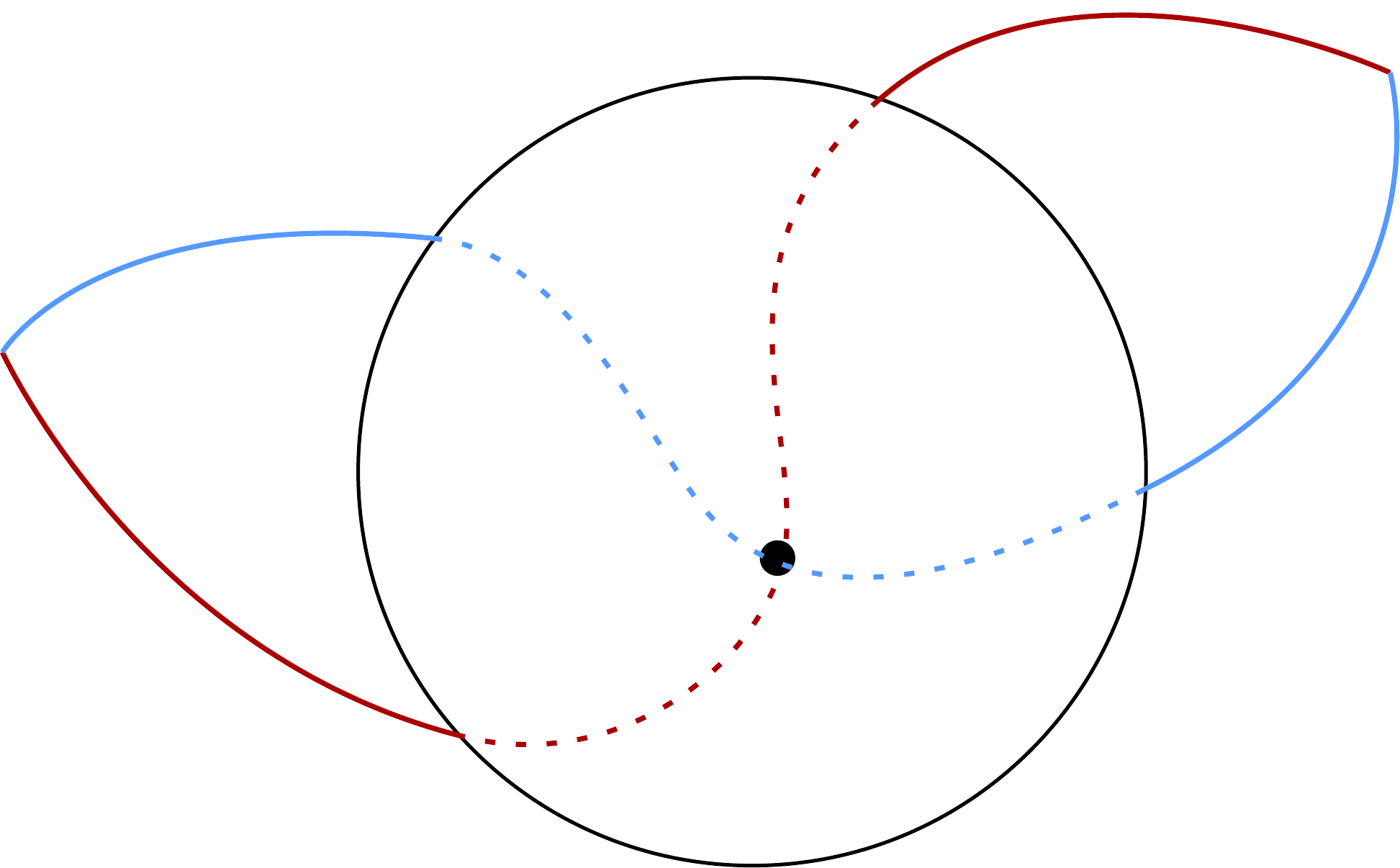
		\caption{The pairs of rays ($\cR_{t_1}, \cR_{u_1}$) and ($\cR_{t_2}, \cR_{u_2}$) are linked. If $\cR_{t_1}$ and $\cR_{u_1}$ land at a common point in the circle, $\cR_{t_2}$ and $\cR_{u_2}$ land at a common point inside the circle. If the rays do not intersect one another, that landing must be common between all four rays.}
		\label{fig:linkingAtInfinity}
	\end{figure}

	%This is the main lemma for the proof of the main theorem
	With this in hand, we are ready to prove Lemma \ref{lem:landing}.
	\begin{proof}[Proof of Lemma \ref{lem:landing}]
		Let $B_+$ denote the set of rays in $\rp_i^{f,g}$ that land at the same point of $J_g$ as $\cR_{t_+}$, and let $B_-$ denote the set of rays in $\rp_i^{f,g}$ that land at the same point of $J_g$ as $\cR_{t_-}$. 
		By Lemma \ref{lem:even_equiv}, both $B_-$ and $B_+$ have more than one element. Let $u_-\neq t_-$ be an angle in $\ap_i^f$ such that $\cR^{f,g}_{u_-} \in B_-$. Let $n_i$ be the period of $\cR_{t_-}^{f,g}$ (and $\cR_{t_+}^{f,g}$) under the action of $f\sqcup g$ on $\rp_i^{f,g}$.  There exists some smallest $0<\ell<n_i$ so that $(f\sqcup g)^\ell(\cR_{t_-}^{f,g}) = \cR^{f,g}_{u_-}$. Let $\cR^{f,g}_{u_+} = (f\sqcup g)^\ell(\cR_{t_+}^{f,g})$. By Lemma \ref{lem:even_equiv}, $\cR^{f,g}_{u_+} \in B_+$.
		
		By Lemma \ref{lem:homeo_sector}, $f^{\ell}$ maps a sector $S'\subset\widetilde{\C}_f$ that contains $\cR_{t_-}^{f}\cup\cR_{t_+}^{f}$ homeomorphically on to its image.  Since the homeomorphism from $S'$ to $f^{\ell}(S')$ is the restriction of a holomorphic map, it is orientation preserving.  In particular, $f^{\ell}$ preserves the order, relative to the boundary of $f^{\ell}(S')$, of the rays $f^{\ell}(\cR_{t_-}^f)=\cR_{u_-}^f$ and $f^{\ell}(\cR_{t_+}^{f})=\cR_{u_+}^f$.  That is: the pairs of mated rays $(\cR_{t_-}^{f,g},\cR^{f,g}_{u_-})\in B_-\times B_-$ and $(\cR_{t_+}^{f,g},\cR^{f,g}_{u_+})\in B_+\times B_+$ are linked.  By Lemma \ref{lem:equivlinkin}, $\cR_{t_-}^{f,g}$ and $\cR^{f,g}_{t_+}$ land at the same point of $J_g$.  Then Lemma \ref{lem:conj_wakes} says that $f$ and $g$ are in conjugate wakes.
	\end{proof}
	
	\section{Proof of Main Theorem}\label{using-linking} 
	
		We prove the main theorem by demonstrating the existence of a set $M \supseteq P_g$ invariant under $g$ (depending on $ f $), an $ N $, and a metric $ \omega_0 $ on $ \Gamma_{f,g} $ such that $(f\sqcup g)^{-N}(\Gamma_{f,g}) $ loosens, or said differently, has $ \Emb[\rho]<1 $. 
	
	Recall that in Section \ref{sec:spines} we needed a $ g$-invariant set $ M$ to define the Hubbard spine $ \Gamma_g=\Gamma_g(M) $.
	If a mated ray $\cR$ lands at a point $w \in J_g$, we say that $\cR$ lands at $\varphi(w) \in \Gamma_g(M)$ under $\varphi$ where $\varphi^g : J_g \to \Gamma_g(M)$ is the collapse map as defined in Section \ref{sec:HubbardElasticStructures}. Here, we will choose the set $M$ according to the following analogue of Lemma~\ref{lem:landing}. %Let $\Gamma_g^k$ denote the graph spine created with respect to 
	%Let $M_k = g^{-k}(P_g)$.
	\begin{prop}\label{orbit-pullback}
	There is $ M$ that contains $P_g $ so that all distinct rays in $ \RP_{f,g} $ land at distinct points on $ \Gamma_g(M) $. %Furthermore, $ M $ may be taken to be $ g^{-1}(P_n)$ for $ n $ sufficiently large.
	\end{prop}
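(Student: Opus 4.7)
The plan is to combine the Linking Lemma (Lemma \ref{lem:landing}) with a judicious choice of $M$ so that the landing points in $J_g$ of the mated rays are each fixed by the collapse map $\varphi^g = \psi \circ \phi_{\langle M\rangle}$. The main theorem places $f$ and $g$ in non-conjugate limbs of $\cM$, so Lemma \ref{lem:landing} immediately yields that distinct rays in $\RP_{f,g}$ have distinct landing points in $J_g$. Let $L\subset J_g$ denote this finite set of landing points, so $|L|=|\RP_{f,g}|$.

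Next, I observe that $L$ consists of $g$-periodic points. Since $f$ lies in the main molecule, every angle $t\in \cP(f)=\bigcup_{i=1}^m \cP_i$ is the argument of a ray of $f_i$ landing at a repelling periodic point (the repelling orbit corresponding to the parabolic cycle $\cO_i$ of $f_{\widehat{c}_i}$), hence is strictly periodic under angle doubling. Consequently $\cR^g_{-t}$ lands at a $g$-periodic point, and setting
\[ M \eqdef P_g \cup \bigcup_{n\geq 0} g^n(L) \]
produces a finite, forward $g$-invariant set containing $P_g$ with $M\setminus P_g \subset J_g$, admissible for defining $\Gamma_g(M)$.

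With this choice, I claim $\varphi^g$ fixes each $w\in L$. Since $w\in L\subset M \subset [M]$ while $w\in J_g=\partial K_g$, every neighborhood of $w$ in $\widetilde{\bbC}$ meets $\widetilde{\bbC}\setminus K_g$; this region is disjoint both from $[M]\cup F_M$ and from the set $C$ used to define $\langle M\rangle$. Therefore $w\in \partial([M]\cup F_M)=\Gamma_g(M)$ and $w\in \partial C=\langle M\rangle$. Because $\phi_{\langle M\rangle}$ is the identity on $\langle M\rangle$ and $\psi$ restricts to $\Id$ on $\Gamma_g(M)\cap \langle M\rangle$, we conclude $\varphi^g(w)=w$. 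Combined with the first step, for any two distinct angles $t,t'\in \cP(f)$ the landing points of $\cR^{f,g}_t$ and $\cR^{f,g}_{t'}$ in $\Gamma_g(M)$ are distinct, which is the claim.

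The only real subtlety is the finiteness of $M$, which is exactly where the hypothesis that $f$ lies in the main molecule enters: it forces the angles in $\cP(f)$ to be strictly periodic (rather than merely preperiodic), so the forward $g$-orbit of $L$ is finite. Once $M$ is chosen, the verification that $\varphi^g|_L=\Id$ is a direct unwinding of the definitions from Section \ref{sec:spines}.
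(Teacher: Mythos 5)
Your proof is correct, and it takes a genuinely different route from the paper's. The paper sets $M=g^{-n}(P_g)$ for $n$ large: since Fatou components are dense in $K_g$, some Fatou component $F$ with $\overline{F}$ meeting $[z_1,z_2]$ separates any two distinct landing points $z_1,z_2\in J_g$ (whose distinctness also comes from Lemma~\ref{lem:landing}), and by Poirier's identity $g^{-n}(\Gamma_g(P_g))=\Gamma_g(g^{-n}(P_g))$ one arranges $\partial F\subset\Gamma_g(M)$, so the two landing points remain separated after collapsing. You instead observe that each angle in $\cP(f)$ is periodic under doubling (each $\cP_i$ being an orbit portrait of a periodic cycle), so the landing set $L\subset J_g$ consists of $g$-periodic points, $M:=P_g\cup\bigcup_{n\geq 0}g^n(L)$ is therefore finite and forward invariant, and $L\subset M\cap J_g$ forces $L\subset\Gamma_g(M)\cap\langle M\rangle$ because a neighborhood of any $w\in J_g$ meets $\widetilde{\bbC}\setminus K_g$, which lies outside both $F_M\cup[M]$ and the chain hull carrier $C$; hence $\varphi^g|_L=\Id$. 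This is cleaner in a few respects: it avoids the Fatou-density and Poirier-pullback steps, it yields the stronger conclusion that $\varphi^g$ fixes (not merely separates) the landing points, and your $M$ actually respects the standing constraint $M\setminus P_g\subset J_g$ from Section~\ref{sec:HubbardElasticStructures}, which $g^{-n}(P_g)$ does not literally satisfy since it contains preimages of the superattracting cycle lying in Fatou components. You also correctly flag the dependence on Lemma~\ref{lem:landing} for distinctness in $J_g$, which the paper leaves implicit in the phrase ``where $z_1\neq z_2$.'' One small remark: the main-molecule hypothesis is not strictly needed for periodicity of $L$ in the way you suggest (any orbit portrait of a periodic cycle has periodic angles), but it is what makes $\cP(f)=\bigcup_i\cP_i$ the right finite collection of angles in the first place.
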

	
	\begin{proof}
		%Since $ |\RP_{f,g}|<\infty $, we prove that for any pair of rays$\cR_1,\cR_2$, there   
		Suppose $ \mathcal{R}_1, \mathcal{R}_2\in \RP_{f,g} $ land at $z_1$ and $z_2\in J_g$ respectively, where $z_1\neq z_2$. Consider the regulated arc $ [z_1,z_2] \subset K_g$. Since Fatou components are dense in $ K_g $, there is a Fatou component $ F\subset K_g $ whose closure intersects $[z_1,z_2]$, so $z_1$ and $z_2$ are in different components of $J_g\setminus \overline{F}$. Since $ F $ is preperiodic, there exists some $ n $ such that $ F $ is in $ g^{-n}(\Gamma_g(P_g))) $. Choose $M=g^{-n}(P_g)$.  It is a consequence of Poirier \cite[Lemma 2.13]{Po} that $ g^{-n}(\Gamma_g(P_g)) = \Gamma_g(g^{-n}(P_g))$. Since $ z_1 $ and $ z_2 $ are either both in $ \partial F $ or are in different components of $ \Gamma_g(M)\setminus F $, $\mathcal{R}_1$ and $\mathcal{R}_2$ land at distinct points of $\Gamma_g(M)$.
	\end{proof}
	
%Let $M $ be  chosen according to \propref{orbit-pullback}. Recall that the spine $\Gamma_g$ is made up of Fatou components $F_M$ and segments contained in $[M]$. %Lemma \ref{lem:landing} combined with the choice of $M$ then gives the following:
Therefore we have the following corollary of Lemma \ref{lem:landing} and Proposition\ref{orbit-pullback}:
\begin{cor}\label{cor:landing}
	If $f$ and $g$ are not in conjugate limbs, there exists some $\delta>0$ such that every landing point of $\RP^{f,g}_0$ in $\cO^g$ has a $\delta$-neighborhood (with respect to $\omega^g_N$) which does not contain any other landing point. 
	\end{cor}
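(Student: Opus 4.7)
The plan is to combine Lemma~\ref{lem:landing} with Proposition~\ref{orbit-pullback} and then exploit the finiteness of $\RP_{f,g}$. First, I would argue that $\RP_{f,g}$ is a finite collection: since $f$ has a finite tuning sequence $\langle f_0,\ldots,f_m\rangle$ and each angle portrait $\cP_i$ is a finite subset of $\Q/\Z$, the union $\cP(f)=\bigcup_{i=1}^m \cP_i$ is finite, and hence $\RP_{f,g}=\{\cR^{f,g}_t: t\in \cP(f)\}$ has finitely many rays. Consequently, the set $\cO^g$ of landing points in $J_g$ of rays in $\RP_{f,g}$ is finite.

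Next, since $f$ and $g$ are assumed to lie in non-conjugate limbs, the contrapositive of Lemma~\ref{lem:landing} tells us that no two distinct rays of $\RP_{f,g}$ can land at a common point of $J_g$. Invoking Proposition~\ref{orbit-pullback}, we then fix a $g$-invariant finite set $M \supseteq P_g$ (depending on $f$) such that distinct rays of $\RP_{f,g}$ also have distinct landing images in the Hubbard spine $\Gamma_g(M)$ under the collapse map $\varphi^g$. Thus, the image of $\cO^g$ in $\Gamma_g(M)$ consists of finitely many pairwise distinct points.

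Now I would equip $\Gamma_g(M)$ with the path metric induced by the elastic structure $\omega^g_N$ produced by Theorem~\ref{tree loosens} (for the $N$ chosen in Section~\ref{sec:HubbardElasticStructures}). Because $\omega^g_N$ assigns a positive elastic length to each edge of the finite graph $\Gamma_g(M)$, the induced path metric is a genuine metric that is positive on distinct pairs of vertices. In particular, since $\cO^g$ has finitely many distinct images in $\Gamma_g(M)$, the minimum pairwise distance
\[
\delta_0 \eqdef \min\{\dist_{\omega^g_N}(\varphi^g(w_1),\varphi^g(w_2)) : w_1\neq w_2 \in \cO^g\}
\]
is strictly positive. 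Setting $\delta \eqdef \delta_0/3$ yields the desired separation: the $\delta$-neighborhood (with respect to $\omega^g_N$) of every landing point in $\cO^g$ avoids all other landing points.

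I do not anticipate a substantive obstacle in this argument; the entire content is a finiteness-plus-compactness harvest of the previous results. The only bookkeeping subtlety is to verify that the extension of $\omega^g_N$ to the full graph $\Gamma_g(M)$ (rather than merely $\Gamma_g(P_g)$) still assigns positive length to every edge, which is immediate from the construction in Section~\ref{sec:HubbardElasticStructures} since pullback and pushforward of a strictly positive elastic structure remain strictly positive on each edge.
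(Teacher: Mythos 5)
Your proposal is correct and takes essentially the same approach the paper intends. The paper presents the corollary without a written proof, simply asserting it follows from Lemma~\ref{lem:landing} and Proposition~\ref{orbit-pullback}; your argument (finiteness of $\cP(f)$ hence of $\cO^g$, distinct landing points on $\Gamma_g(M)$ by the two cited results, and a positive minimum pairwise $\omega^g_N$-distance among finitely many distinct points in the finite graph) is exactly the implicit derivation.
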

	Let $\epsilon = \delta/(2v)$, where $v$ is the largest valence of a point in $\cO^g$ in $\Gamma_g$ .

	Let $ \omega_0^g $ be the elastic structure on $ \Gamma_g $ which assigns length 1 to each edge. Given a virtual endomorphism $\pi,\varphi: g^{-1}(\Gamma_g)\rightarrow \Gamma_g $ recall that $\varphi_k^g$ is the composition of the inclusion map $\widetilde{\bbC}\setminus g^{-k}(P_g)\rightarrow \widetilde{\bbC}$ with a retraction to $\Gamma_g(M)$.
	
	By Theorem \ref{tree loosens}, there is some $N_1$ such that for $N\geq N_1$, $ (\varphi^g_N)_*(g^N)^*(e)>2 $ for every edge $e\subset \Gamma_g$.  Given $\omega_0^g$, Theorem \ref{ray loosen} allows us to construct an elastic structure $\omega_0^f$ on $\Gamma_f$ such that for all $n\geq N_2$, there is a collapse map $\varphi^f_n$ with 	$\Emb[\varphi^f_n]\leq 1$.
	%Theorem \ref{ray loosen} provides an elastic structure $ \omega_0^f $ on $\Gamma_f$ that is induced by $ \omega_0^g $, some $ N_2 $ and a collapse map $ \varphi^f_N $ such that for $N\geq \max\{N_1, N_2\}$, %both $ (\varphi^f_N)_*(f^{N})^*(\Gamma_f)  $ and $  (\varphi^g_N)_*(g^N)^*(\Gamma_g) $ satisfy 
	%$ \Emb[\varphi^f_N]
	%, \Emb[\varphi^g_N]
	%\leq 1 $.
	
	 Let $ N\geq N_1, N_2 $.
	
	At last we define the major elastic graphs in this paper. Because our mating spine $ \Gamma_{f,g}= \RP_{f,g}\cup \Gamma_g$ is comprised of two parts, we may specify a metric $ \nu $ on $ \Gamma_{f,g} $ by a metric $ \nu_f $ on $ \RP_{f,g} $ and a metric $ \nu_g $ on $ \Gamma_g $. Specifically, we write $ \nu=[\nu_f, \nu_g] $. 
	
	Then, let $ \cP^N(f)\eqdef \{t\in \R/\Z : 2^Nt\in \cP(f)\} $ and $ \RP^N_{f,g} \eqdef \bigcup_{t\in \cP^N_f} \overline{\cR^{f,g}_t}$. 
	Intuitively, we think of $ \RP^N_{f,g} $ as the pullback of $ \RP_{f,g} $ under $ (f\sqcup g)^N $. Let $ \Gamma^N_{f,g}\eqdef \RP_{f,g}^N\cup g^{-N}(\Gamma_g)$. Finally, we define the partial pull-back along $ f^N $: $ \widetilde{\Gamma}_{f,g}^N \eqdef \RP_{f,g}^N\cup \Gamma_g $.
	
	Let
	$G_1 = (\Gamma^N_{f,g},  \omega_1)$, 
	$G_2 = (\widetilde{\Gamma}^N_{f,g}, \omega_2), $ and $ 
	G_i = (\Gamma_{f,g}, \omega_i)$ for $ i=0,3,4 $,
	where the metrics $\omega_i$ for $i \in \{0,1, 2, 3, 4\}$ are defined as follows.
	\begin{align*}
	\omega_0 &:= [\omega^f_0, \omega^g_0] 
	&\omega_1 &:= (f^N \sqcup g^N)^*\omega_0 = [(f^N)^*\omega^f_0, (g^N)^*\omega^g_0]\\
	\omega_2 &:= [(f^N)^*\omega^f_0, (g^N)_*(g^N)^*\omega^g_0]
	&\omega_3 &= [\omega^f_3, \omega^g_3] := \omega_2|_{\Gamma_{f,g}} - \epsilon\\
	\omega_4 &:= [\omega^f_3 + 2\epsilon, \omega^g_3 - 2\delta].
	\end{align*}
	
	Next, we define a sequence of continuous maps (compare section 2 and Figure 2).
	$$
	\rho_i : G_i \to G_{i+1 \: (\text{mod } 5)}
	\hspace{5mm} \text{for} \hspace{5mm}
	i \in \{1, 2, 3, 4\}
	$$
	between elastic graphs. Firstly, let $\rho_1$ be the identity on $\RP_{f,g}^N$, and the collapse map $\varphi^g_{N}$ on $g^{-N}(\Gamma_g)$ from \thmref{tree collapse emb}. Secondly, let $\rho_2$ be the identity on $\Gamma_g$, and the collapse map $\varphi^f_N$ on $\RP_{f,g}^N$ as given in \thmref{ray loosen}.
	Thirdly, define $\rho_3$ as follows.
	 For every mated ray $\cR \subset \RP_{f,g}$ with a landing point $x \in \Gamma_g$, let $\rho_3$ ``feed in'' the $\delta$-neighborhood of $x$ in $\Gamma_g$ into the ray $\cR$. More precisely, let $\gamma_x$ be the segment of $\cR$ incident to $x$ so that
	$$
	\omega_4(\gamma_x) = 2\epsilon,
	$$
	and let $\rho_3$ map $\cR$ affinely to $\cR \setminus \gamma_x$. Additionally, let $e \subset \Gamma_g$ be an edge with endpoints $x$ and $y$. Let $e_x$ and $e_y$ be the segments of $e$ incident to $x$ and $y$ respectively so that
	$$
	\omega_3(e_x) = \omega_3(e_y) = \delta.
	$$
	If $y$ is not another landing point, then let $\rho_3$ map $e$ affinely to $e \cup \gamma_x$. Otherwise, $y$ is the landing point of some mated ray $\cR'$ which maps to $\cR' \setminus \gamma_y$ under $\rho_3$. In this case, let $\rho_3$ map $e$ affinely to $e \cup \gamma_x \cup \gamma_y$.
	Lastly, let $\rho_4$ be the identity.
	
	Armed with all the above, we can now prove the main theorem:
		\begin{proof}[Proof of Main Theorem]
		Let $ \rho $ be the composition $ \rho=\rho_4\circ\rho_3\circ\rho_2\circ\rho_1 $. This defines a virtual endomorphism $ \rho, (f\sqcup g)^N: G_1\to G_0 $. By Theorem 1.1, $ f\sqcup g $ is rational iff $ \Emb[\rho] <1$. 
		By \cite{thurston_eg} (A3),  energy is submultiplicative; hence, it is enough to show that the product of embedding energies of $ \rho_i $ is less than 1. We have
		
		\begin{itemize}
		    \item $\Emb[\rho_1]\leq 1$ by Theorem \ref{tree collapse emb}.
		    \item $\Emb[\rho_2] \leq 1$ because $N\geq N_2$.
		    \item $\Emb[\rho_3]\leq 1$: Observe that for any edge $e$ of $\Gamma_g(M)$, we have $\rho_3^{-1}(e) \subset e$, and
		$$
		\omega_3(\rho_3^{-1}(e)) \geq \omega_3(e) - 2 \delta = \omega_4(e).
		$$
		Hence, for any point $p \in \rho_3^{-1}(e)$, we have $|\rho_3'(p)| \leq 1$.
		
		Let $\cR \subset \RP^{f,g}_0$ with landing point $x$, and let
		$
		\gamma_x = \cR \setminus \rho_3(\cR).
		$
		For any point $q \in \rho_3(\cR)$, we have
		$
		\rho_3^{-1}(q) = \{p\} \in \cR.
		$
		Because
		$$
		\omega_4(\rho_3(\cR)) = \omega_4(\cR) - 2\epsilon = \omega_3(\cR),
		$$
		we have $|\rho_3'(p)| =1$.
		Now, for any point $q \in \gamma_x$, we have
		$
		\rho_3^{-1}(q) = \{p_1, \ldots, p_k\},
		$
		where $k \leq v$ is the valence of $x$ in $\Gamma_g(M)$. Note that $p_i$ is contained in a segment $e^i_x$ of an edge $e^i$ incident to $x$ which maps to $\gamma_x$ under $\rho_3$. Moreover,
		$$
		\omega_3(e^i_x) = \delta
		\hspace{5mm} \text{and} \hspace{5mm}
		\omega_4(\gamma_x) = 2\epsilon.
		$$
		
		Thus, by the choice of $\epsilon$, we have
		$
		\Sigma_{i=1}^k|\rho_3'(p_i)| = k2\epsilon/\delta \leq 1
		$.
		    \item $\Emb[\rho_4]< 1$:
		Observe that for any external ray $\cR \subset \RP^{f,g}_0$, we have
		$
		\omega_4(\cR) = \omega_0(\cR) + \epsilon,
		$
		while for any edge $e$ of $\Gamma_g$, 
		$
		\omega_4(e) \geq 2 - 2 \delta > 1.
		$
		Hence, $\Emb(\rho_4) <1$.
		\end{itemize}
	\end{proof}
	\bibliographystyle{alpha}
	\bibliography{mating2}

\begin{thebibliography}{McM16}

\bibitem[BT12]{MatingQuestions}
Epstein Adam Koch Sarah Meyer Daniel Pilgrim Kevin Rees~Mary Buff, Xavier and
  Lei Tan.
\newblock Questions about polynomial matings.
\newblock {\em Ann. Fac. Sci.}, 12(5):1149--1176, 2012.

\bibitem[Dud11]{Du}
Dzmitry Dudko.
\newblock Matings with laminations, 2011.

\bibitem[FK10]{FK}
Ross Flek and Linda Keen.
\newblock Boundaries of bounded {F}atou components of quadratic maps.
\newblock {\em J. Difference Equ. Appl.}, 16(5-6):555--572, 2010.

\bibitem[McM16]{mcmullen2016complex}
Curtis~T McMullen.
\newblock {\em Complex Dynamics and Renormalization (AM-135)}, volume 135.
\newblock Princeton University Press, 2016.

\bibitem[Mil00]{Milnor}
John Milnor.
\newblock Periodic orbits, externals rays and the {M}andelbrot set: an
  expository account.
\newblock {\em Ast\'{e}risque}, (261):xiii, 277--333, 2000.

\bibitem[Mil06]{MilnorBook}
John Milnor.
\newblock {\em Dynamics in one complex variable}, volume 160 of {\em Annals of
  Mathematics Studies}.
\newblock Princeton University Press, Princeton, NJ, third edition, 2006.

\bibitem[Poi10]{Po}
Alfredo Poirier.
\newblock Hubbard trees.
\newblock {\em Fund. Math.}, 208(3):193--248, 2010.

\bibitem[Ree86]{R}
Mary Rees.
\newblock Realization of matings of polynomials as rational maps of degree two.
\newblock 1986.

\bibitem[Shi00]{S}
Mitsuhiro Shishikura.
\newblock On a theorem of {M}. {R}ees for matings of polynomials.
\newblock In {\em The {M}andelbrot set, theme and variations}, volume 274 of
  {\em London Math. Soc. Lecture Note Ser.}, pages 289--305. Cambridge Univ.
  Press, Cambridge, 2000.

\bibitem[SL00]{SL}
Mitsuhiro Shishikura and Tan Lei.
\newblock A family of cubic rational maps and matings of cubic polynomials.
\newblock {\em Experiment. Math.}, 9(1):29--53, 2000.

\bibitem[Tan92]{Ta}
Lei Tan.
\newblock Matings of quadratic polynomials.
\newblock {\em Ergod. Th. Dynam. Sys.}, 12(3):589--620, 1992.

\bibitem[Thua]{thurston_eg}
Dylan~P. Thurston.
\newblock Elastic graphs.
\newblock arXiv: 1612.04424.

\bibitem[Thub]{thurston_positive}
Dylan~P. Thurston.
\newblock A positive characterization of rational maps.
\newblock arXiv: 1612.04424.

\end{thebibliography}
\end{document}